\numberwithin{equation}{section}
\def\today{\number\day\space\ifcase\month\or   January\or February\or
   March\or April\or May\or June\or   July\or August\or September\or
   October\or November\or December\fi\   \number\year}
\theoremstyle{definition}
\newtheorem{thm}{Theorem}[section]
\newtheorem{prp}[thm]{Proposition}
\newtheorem{dfn}[thm]{Definition}
\newtheorem{cor}[thm]{Corollary}
\newtheorem{rmk}[thm]{Remark}
\newtheorem{exa}[thm]{Example}
\newtheorem{qst}[thm]{Question}
\newcommand{\beq}{\begin{equation}}
\newcommand{\eeq}{\end{equation}}
\newcommand{\beqr}{\begin{eqnarray*}}
\newcommand{\eeqr}{\end{eqnarray*}}
\newcommand{\bal}{\begin{align*}}
\newcommand{\eal}{\end{align*}}
\newcommand{\bei}{\begin{itemize}}
\newcommand{\eei}{\end{itemize}}
\newcommand{\te}{\theta}
\newcommand{\ld}{\lambda}
\newcommand{\ph}{\varphi}
\newcommand{\ps}{\psi}
\newcommand{\Q}{{\mathbb{Q}}}
\newcommand{\Z}{{\mathbb{Z}}}
\newcommand{\R}{{\mathbb{R}}}
\newcommand{\C}{{\mathbb{C}}}
\newcommand{\T}{{\mathbb{T}}}
\newcommand{\N}{{\mathbb{Z}}_{\ge 0}}
\newcommand{\Ne}{{\mathbb{Z}}_{\ge 0}}
\newcommand{\deta}{{\operatorname{det}}}
\newcommand{\Id}{{\operatorname{Id}}}
\newcommand{\Tr}{{\mathrm{Tr}}}
\newcommand{\id}{{\mathrm{id}}}
\newcommand{\Aut}{{\mathrm{Aut}}}
\newcommand{\ca}{C*-algebra}
\newcommand{\nt}{noncommutative tori}
\newcommand{\cp}{crossed product}
\renewcommand{\S}{\subset}
\newcommand{\Cst}{\textup{C}^\ast}
\newcommand{\K}{\mathrm{K}} % K-theory
\newcommand{\KK}{\mathrm{KK}} % KK-theory
\title{Metaplectic transformations and finite group actions on noncommutative tori }   % ???
\author[S.~Chakraborty]{Sayan Chakraborty} 
\address{ Stat-Math unit, Indian Statistical Institute,
203 Barrackpore Trunk Road,
Kolkata 700 108, India}
\email{sayan.c@isical.ac.in}
\author[F.~Luef]{Franz Luef}
\address{Department of Mathematical Sciences, NTNU Trondheim, 7041 Trondheim, Norway}
\email{franz.luef@math.ntnu.no}
\keywords{Metaplectic transformations, noncommutative torus, $C^*$-crossed product, group actions}
\subjclass[2010]{46L35, 22D2}
\begin{document}

\begin{abstract}In this article we describe extensions of some $\K$-theory classes of Heisenberg modules over higher-dimensional noncommutative tori to projective modules over crossed products of non\-commutative tori by finite cyclic groups, aka noncommu\-tative orbi\-folds. The two dimensional case was treated by Echterhoff, L\"uck, Phillips and Walters. Our approach is based on the theory of metaplectic transformations of the representation theory of the Heisenberg group. We also describe the generators of the K-groups of the crossed products of flip actions by $\Z_2$ on 3-dimensional noncommu\-tative tori.
\end{abstract}

\maketitle \pagestyle{myheadings} \markboth{S.~Chakraborty and F. Luef}{Metaplectic transformations and noncommutative tori}
\hrule

\section{Introduction}\label{Sec:Dfn}

The $n$-dimensional noncommutative torus, $A_{\te}$, is defined as the universal \ca\  generated by 
unitaries $U_1,\dots,U_n$ subject to the relations
\[
U_k U_j =  e^{2 \pi i \te_{jk} }\, U_j U_k\quad\text{for}~~j, k = 1, \cdots, n,
\]
where $\te=(\te_{ij})$ is a skew-symmetric real $n \times n$ matrix. For the 2-dimensional noncommutative torus, since ${\te}$ is determined by only one real number, $\te_{12}$, we will denote $\te_{12}$ by ${\te}$ again and the corresponding 2-dimensional noncommutative torus by $A_{\te}$.

Noncommutative tori are central objects in noncommutative geometry. Rieffel (\cite{ri88}) constructed projective modules (which are known as Heisenberg modules) over all noncommutative tori. These constitute the framework for studying the geometry of noncommutative tori such as connections, curvature and Dirac operators on noncommutative tori. While the 2-dimensional noncommutative torus is quite well understood, there remain quite a large number of open questions for higher dimensional noncommutative tori.

  Crossed product C*-algebras associated to finite group actions on noncommutative tori go back to the work of Bratteli, Elliott, Evans and  Kishimoto. They considered (\cite{BEEK91}) the flip action of $\Z_2$ on two dimensional noncommutative tori and the associated crossed products. Recall that the flip action of $\Z_2$  on an 2-dimensional noncommutative torus is given by mapping the generators $U_i$ to $U^{-1}_i$ for $i=1,2$. Kumjian (\cite{kumjian90}) computed the $\K$-theory of the crossed product $A_{\te} \rtimes \Z_2$ for two dimensional noncommutative tori $A_{\te}.$ (Also see \cite{walters:k-theoryflip} for more detailed analysis on $A_{\te} \rtimes \Z_2.$) For irrational $\te$s, Walters in \cite{walters:k-theoryflip} stated generators of the $\K$-theory of $A_{\te} \rtimes \Z_2$ by showing that the generators of the $\K_0$ group of $A_{\te}$ can be made ``flip invariant".  Later in \cite{Wal00} and \cite{wa04}, Walters considered $Z_4$ and $\Z_6$ actions on two dimensional noncommutative tori. Recall that the following defines $\Z_4$ and $\Z_6$ actions on two dimensional noncommutative tori:

  $$ U_1 \rightarrow U_2, \hspace{.1cm}  U_2 \rightarrow U^{-1}_1 \hspace{.5cm} (\text{for}\hspace{.1cm} \Z_4),$$ 
    $$ U_1 \rightarrow U_2, \hspace{.1cm} U_2 \rightarrow e^{-\pi i \te}U^{-1}_1U_2 \hspace{.5cm} (\text{for}\hspace{.1cm} \Z_6).$$
    For these actions Walters showed that the generators (as projective modules) of $K_0$ of $A_{\te}$ are $\Z_2$ and $\Z_4$ equivariant for irrational $\te$ to construct projective modules over $A_{\te} \rtimes Z_4$ and $A_{\te} \rtimes Z_6$, which constitute generators of the corresponding $K_0$ groups.

    Later, in \cite{ELPW}, Echterhoff, L\"uck, Phillips and Walters studied 2-dimen\-sional $A_\theta$ acted on by a finite cyclic subgroup $F$ of $SL_2(\Z)$. Note that $SL_2(\Z)$ has a canonical action on $\Z^2$, which can be lifted  to $A_\theta$. The previous actions of $\Z_2$, $\Z_4$ and $\Z_6$ are implemented by matrices in $SL_2(\Z)$.  It is demonstrated that the standard canonical projective module over $A_\te$, aka Bott class (which is a completion of $\mathscr{S}(\R)$, Schwartz functions on $\R$), can be made equivariant by the action of $F$ yielding a projective module over the crossed product algebra  $A_\theta \rtimes F$. (A result of Green and Julg (see Proposition \ref{mainprop}) shows that equivariant $\K$-theory elements provide elements in the crossed product.) It is also known that the Bott class along with the identity element generate the $\K_0$ group of the noncommutative torus (see the proof of Lemma 4.8 in \cite{ELPW}).

    Recently,  actions of finite groups on higher-dimensional noncommutative tori have been considered in the article \cite{jh}. Let $W \in GL_n(\Z)$ be the generator of the finite cyclic group $F$ acting on $\Z^n$ with $W^T\theta W = \theta$. Then the authors in \cite{jh} showed that there exists an action of $F$ on $n$-dimensional $A_\theta$  (Section 4). Let us assume that $n$ is an even number, $n = 2m$. To analyse projective modules over the corresponding crossed product algebras, we restrict our analysis to the class of Heisenberg modules which are an appropriate completion of $\mathscr{S}(\R^m),$ denoted by $\mathcal{E}$.  We show that this class of projective modules (which may be thought of as higher dimensional versions of the Bott class) over higher dimensional noncommutative tori can be made $F$-equivariant. The metaplectic action (which we introduce in Section 5) is the key tool in our arguments.  This generalises the previous results of Walters and \cite{ELPW}.
Our main theorem states:

\begin{thm} \label{main:intro}

Let $W$ be the generator of the finite cyclic group $F$ acting on $\Z^n$ with $W^T\theta W = \theta$ and hence on $A_\theta$. Then the metaplectic action of $W$ on $\mathscr{S}(\R^m)$ extends to an action on $\mathcal{E}$ such that $\mathcal{E}$ becomes an $F$-equivariantly finitely generated projective $A_\theta$ module and thus a finitely generated  projective module over $A_\theta \rtimes F$.
\end{thm}

Coming back to the flip case, note that this action can be defined for general $n$-dimensional tori $A_\theta$. In this case any Heisenberg module over $A_\te$ can be extended to a module over the crossed product  $A_\theta \rtimes \Z_2$ (see Section 7). Though, in \cite{FW}, the authors have computed the $\K$-theory of $A_\theta \rtimes \Z_2$ for higher dimensional $A_\theta$, but some computations in \cite{FW} are not clear to us (see Remark \ref{FWremark}). They used an exact sequence by Natsume (\cite{Na}) to compute the $\K$-theory of  $A_\theta \rtimes \Z_2$ (see Section 7) as $A_\theta \rtimes \Z_2$ can be written as $A_{\theta'} \rtimes (\mathbb{Z}_2 * \mathbb{Z}_2 ),$ where $A_{\theta'}$ is an $(n-1)$-dimensional noncommutative torus.  For  crossed products like $A \rtimes (\mathbb{Z}_2 * \mathbb{Z}_2 )$, Natsume's exact sequence looks like 

{\tiny \[
\begin{CD}
\K_0 (A) @>{}>>\K_0 (A\rtimes  \Z_2)\oplus \K_0 (A\rtimes \Z_2)@>{}>> \K_0 (A\rtimes \Z_2*\Z_2)   \\
@A{}AA & &  @VV{e_1}V            \\
 \K_1 (A\rtimes \Z_2*\Z_2) @<<{}<  \K_1 (A\rtimes \Z_2)\oplus \K_1 (A\rtimes \Z_2) @<<{}< \K_1 (A).
\end{CD}
\]} 

In the final section we study this exact sequence especially, the connecting map $e_1$, and relate it to the classical Pimsner--Voiculescu exact sequence.  Recall that for crossed products like $A \rtimes \Z$, the Pimsner--Voiculescu sequence looks like 
{ \[
\begin{CD}
\K_0 (A) @>{}>>\K_0 (A)@>{}>> \K_0 (A \rtimes \Z)   \\
@A{}AA & &  @VV{e_2}V            \\
 \K_1 (A \rtimes \Z) @<<{}<  \K_1 (A) @<<{}< \K_1 (A).
\end{CD}
\]} 
The main result of the final section can be stated as follows:
\begin{thm}\label{main1:intro}
For unital $A$, the connecting maps of the above two sequences commute in the following sense:
\[
\xymatrixrowsep{5pc}
\xymatrixcolsep{5pc}
\xymatrix
{
\K_0(A\rtimes \Z_2*\Z_2) \ar[r]^{e_1} \ar[rd]^{p} &
\K_1 (A)  \\
&\K_0(A \rtimes \Z),\ar[u]^{e_2}}
\]
where $p$ is the map induced by the natural map from $A\rtimes \Z_2*\Z_2 \cong (A \rtimes \Z )  \rtimes \Z_2$ to $M_2(A \rtimes \Z)$.
\end{thm}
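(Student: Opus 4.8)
The plan is to derive the stated equality from the naturality of the exponential map in $K$-theory, applied to a morphism between the extension underlying $e_1$ and the one underlying $e_2$.

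First I would set up the two ingredients. Writing $s,t$ for the standard generators of $\Z_2\ast\Z_2$, which act on $A$ by the same automorphism $\af$ (so $\af^2=\id$), the element $v=st$ generates a copy of $\Z$ acting on $A$ trivially; hence $A\rtimes\Z\cong A\otimes C(\T)$ and $A\rtimes(\Z_2\ast\Z_2)\cong(A\otimes C(\T))\rtimes\Z_2$, the $\Z_2$ acting by $\af$ on $A$ and by $z\mapsto\conj z$ on $C(\T)$. Denoting by $u$ the canonical order-two unitary in this crossed product, the natural map of the theorem is the regular representation $q\colon(A\otimes C(\T))\rtimes\Z_2\to M_2(A\otimes C(\T))$ with $q(a)=\diag(a,\af(a))$ for $a\in A$, $q(v)=\diag(v,\conj v)$ and $q(u)=\bigl(\begin{smallmatrix}0&1\\1&0\end{smallmatrix}\bigr)$, so $p=q_\ast$. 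By definition $e_1$ is the exponential map of the canonical Toeplitz-type extension $0\to A\otimes\mathcal K\to\E\to A\rtimes(\Z_2\ast\Z_2)\to0$ whose six-term sequence is Natsume's (equivalently, the Mayer--Vietoris sequence of the amalgamated free product $A\rtimes(\Z_2\ast\Z_2)=(A\rtimes\Z_2)\ast_A(A\rtimes\Z_2)$), cf.~\cite{Na}, and $e_2$ is the exponential map of the Toeplitz extension $0\to A\otimes\mathcal K\to A\otimes\Toe\to A\otimes C(\T)\to0$ underlying the Pimsner--Voiculescu sequence of $A\otimes C(\T)=A\rtimes_{\id}\Z$. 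For the trivial $\Z$-action this last map is, concretely, the projection of $K_0(A\otimes C(\T))\cong K_0(A)\oplus K_1(A)$ onto the $K_1(A)$-summand.

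The comparison is geometric in origin: $\Z_2\ast\Z_2=D_\I$ acts on its Bass--Serre tree, the real line with integer vertices, and the index-two subgroup $\langle v\rangle\cong\Z$ acts on the same line by translations --- precisely the action used in the Pimsner--Voiculescu construction of $A\otimes C(\T)$. Restriction along $\langle v\rangle\hookrightarrow D_\I$ therefore produces a morphism from the extension defining $e_1$ to a Pimsner--Voiculescu-type extension of $A\otimes C(\T)$, inducing $q$ on the quotient --- the passage from $D_\I$ to its index-two subgroup accounting for the matrix amplification in the target of $q$ --- and inducing, on the ideal, an isomorphism on $K_1(A)$. Naturality of the exponential map then yields $e_1=e_2\circ p$. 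The same argument admits a hands-on version, convenient when $A$ is unital: the flip $z\mapsto\conj z$ on $\T$ has orbit space $[0,\pi]$ and two fixed points $\pm1$, which exhibits $A\rtimes(\Z_2\ast\Z_2)$ as the path algebra $\{g\in C([0,\pi],M_2(A)):g(0),g(\pi)\in q_0(A\rtimes\Z_2)\}$ --- where $q_0\colon A\rtimes\Z_2\to M_2(A)$ is the analogous regular representation --- exhibits $A\otimes C(\T)$ as $C(\T,A)$, and exhibits $q$ as the ``unfolding'' map, sending a function on the orbit space $[0,\pi]$ to the function on $\T$ that agrees with it on one half-circle and with its canonical $\Z_2$-conjugate on the other. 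Both $e_1$ and $e_2$ then appear as Mayer--Vietoris boundary maps --- for the two-set cover of $[0,\pi]$ by $[0,\tfrac{\pi}{2}]$ and $[\tfrac{\pi}{2},\pi]$, and for the cover of $\T$ obtained from it by unfolding --- and $e_1=e_2\circ p$ is the naturality of the Mayer--Vietoris boundary along $q$.

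The main obstacle is the construction and, above all, the bookkeeping of this morphism of extensions. One cannot take the easy route of crossing the Pimsner--Voiculescu extension with $\Z_2$, since the flip $z\mapsto\conj z$ on $C(\T)$ does not lift to an automorphism of $\Toe$ (it would send the unilateral shift to its non-isometric adjoint); one must therefore use Pimsner's tree model of $\E$, or the explicit path-algebra/unfolding picture, and check there that the induced map on the ideal is the \emph{identity} on $K_1(A)$ --- not, say, multiplication by $2$, a factor that could a priori enter through the index $[D_\I:\Z]=2$ or, in the path-algebra picture, through the unfolding of the single overlap point $\{\tfrac{\pi}{2}\}$ into the two points $\{i,-i\}$. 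Concretely this reduces to a careful tracking of the conjugation twist carried by $q$ on the reflected half-circle and of the orientations of the two covers. Once this is settled the theorem follows at once; and, as observed, it provides a transparent description of Natsume's connecting map $e_1$ as ``$p$ followed by the projection onto the $K_1$-summand''.
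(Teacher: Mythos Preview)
Your approach is correct and is, at its core, the same as the paper's: construct a morphism from Natsume's Toeplitz-type extension for $A\rtimes(\Z_2\ast\Z_2)$ to ($M_2$ of) the classical Toeplitz extension for $A\otimes C(\T)$, and invoke naturality of the exponential map. The difference is one of packaging. You frame the construction conceptually---via the Bass--Serre tree of $D_\infty$ and restriction to the index-two subgroup $\Z$, or alternatively via the path-algebra/unfolding picture over $[0,\pi]$---and then flag the ``factor of $2$'' bookkeeping as the main obstacle. The paper bypasses all of this by simply writing the morphism down on generators: with $P=P_1\cup P_2$ (words ending in $g$, split according to their first letter) one has $l^2(P)\cong l^2(\Ne)\oplus l^2(\Ne)$, and the map $P^A_{\mathcal T}\colon\mathcal T^A_{\theta_{12}}\to M_2(\mathcal T^A)$ is specified by sending $\iota_2(g)\mapsto\bigl(\begin{smallmatrix}0&1\\1&0\end{smallmatrix}\bigr)$, $\iota_2(s)\mapsto\bigl(\begin{smallmatrix}0&S\\S^*&0\end{smallmatrix}\bigr)$, $\iota_2(a)\mapsto\bigl(\begin{smallmatrix}a&0\\0&\beta(a)\end{smallmatrix}\bigr)$. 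One then checks by hand that this intertwines the two extensions, with $P^A_{\T}=p$ on quotients and the obvious identification $A\otimes\mathcal K(l^2(P))\cong M_2(A\otimes\mathcal K)$ on ideals.

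What each buys: the paper's explicit formulas make the commutativity of the diagram a direct verification and, in particular, the map on ideals is visibly an \emph{isomorphism}, so your ``multiplication by $2$'' worry evaporates without further argument. Your Bass--Serre/unfolding picture, on the other hand, explains \emph{why} such a morphism exists and would generalise more readily (e.g.\ to other amalgamated free products), at the cost of the bookkeeping you identify.
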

 
Using this result, for totally irrational $\theta$ (for definition see \ref{subsec:basis_of_Atheta}), we discuss the $\K$-theory of crossed products of 3-dimensional $A_\theta$ with respect to the flip action and describe the generators of $\K$-theory (see Corollary \ref{3toriflipped}). This explains the computations of \cite{FW} for the three dimensional case (for totally irrational $\theta$). Presumably this can be done as well for the $n$-dimensional case, which we plan to discuss in another paper.

Notation: $e(x)$ will always denote the number $e^{2\pi i x}$; the standard symplectic matrix on $\mathbb{R}^{2m}$ is defined by $J=\left( \begin{array}{cc}
0 &  I_m\\
-I_m  &  
0
\end{array} \right),$ where $I_m$ is the $m\times m$ unit matrix, and $\mathscr{S}(\R^m)$ will denote the space of smooth functions of rapid decay on $\R^m$. 
%%%%%%%%%%%%%%%%%%%%%%%%%%%%%%%%%%%%%%%%%%%%%%%%%%%%%%%%%%%%%%%%%%%%%%%
\\~\\
 Acknowledgments: This research was partially supported by the DFG through SFB 878. The first named author wants to thank Siegfried Echterhoff and Nikolay Ivankov for valuable discussions. 

%%%%%%%%%%%%%%%%%%%%%%%%%%%%%%%%%%%%%%%%%%%%%%%%%%%%%%%%%%%%%%%%%%%%%%%
\section{Basics on twisted group algebras and noncommutative tori}

Let $G$ be a discrete group.  A map $\omega: G\times G \to \mathbb{T}$ is called a \emph{2-cocycle} if 
$$ \omega(x, y) \omega(xy, z)= \omega(x, yz) \omega(y, z)
\label{cocycle}
$$
whenever $x, y, z \in G $, and if 
$$ \omega(x, 1) = 1 = \omega(1, x)\label{units} $$
for any $x \in G$.

The $\omega$-twisted left regular representation of the group $G$ is given by the formula:

$$(L_{\omega}(x)f)(y) = \omega(x,x^{-1}y)f(x^{-1}y)$$ for $f \in l^2(G)$. The reduced twisted group C*-algebra $C^*(G,\omega)$  is defined as the sub-C*-algebra of $ B(l^2(G))$ generated by the $\omega$-twisted left regular representation of the group $G$. More details can be found in \cite{zeller-meier68, ELPW}.

\begin{exa}

Let $G$ be the group $\Z^n$. For each $n \times n$ real skew-symmetric matrix $\theta$, we can  construct a 2-cocycle on this group by defining $\omega_\theta(x, y) = e(\langle -\theta x,y \rangle )$. The corresponding twisted group C*-algebra $C^*(G, \omega_\theta)$ is isomorphic to the $n$-dimensional noncommutative torus $A_\te$, which was defined in the introduction. 

  \end{exa}
\begin{exa}
Suppose $W$ be an $n \times n$ matrix of finite order with integer entries. Let $F := \langle W \rangle$ act on $\Z^n$ by group automorphism and $\theta$ is an $n \times n$ real skew-symmetric matrix. We assume in addition  that $W$ is a $\theta$-symplectic matrix, i.e. $W^T\theta W = \theta$. Then we can define a 2-cocycle $\omega_\theta '$ on $G:=\Z^n \rtimes F$ by $\omega_\theta '((x,s),(y,t)) = \omega_\theta (x,s\cdot y)$.  Sometimes one calls the corresponding group C*-algebra, $C^*(G, \omega_\theta')$, a {\it noncommutative orbifold}. 
 \end{exa}
 %%%%%%%%%%%%% Section %%%%%%%%%%%%%%%
 \section{Projective modules over noncommutative tori}\label{sec:proj_mod_nt}

We fix  $n=2p + q$ for  $p,\, q \in \N$. Let us choose $\theta
  := \left( \begin{array}{ccc}
\theta_{11} & \theta_{12}\\
  \theta_{21}  & \theta_{22}\
  \end{array} \right)$ any $n \times n$ skew-symmetric matrix partitioned into four sub-matrices $\theta_{11},\theta_{12},\theta_{21},\theta_{22}$ and  $\theta_{11}$ is a $2p\times 2p$ matrix.  We recall the approach of Rieffel \cite{ri88} to the  construction of finitely generated projective
$C^* ( \Z^n, \omega_\theta )$-modules and follow the presentation in \cite{Li03}. Denote $\omega_\theta$ by $\omega$ and define a new cocycle $\omega_1$  on $\Z^n$ by $\omega_1(x, y)  = e(\langle \theta'x , y \rangle/2 )$, where $$
\theta'    =  
\left( \begin{array}{cc}
\theta_{11}^{-1}  &  -\theta_{11}^{-1}\theta_{12}\\
\theta_{21}\theta_{11}^{-1}  &  
\theta_{22} - \theta_{21}\theta_{11}^{-1}\theta_{12}
\end{array} \right).       
$$

Set $\mathcal{A} = C^* ( \Z^n, \omega )$ and
$\mathcal{B} = C^* ( \Z^n, \omega_1 )$.  Let $M$ be the group $\R^p \times \Z^q$, $G := M \times \hat{M}$ and $\langle \cdot , \cdot \rangle$ be the natural pairing between  $M$ and its dual group $\hat{M}$ (our notation does not distinguish between the pairing of a group and its dual group, and the standard inner product on linear spaces). 
Consider the Schwartz space $\mathcal{E}^\infty := \mathcal{S}(M)$
consisting of smooth and rapidly decreasing complex-valued functions on $M$.

Denote by $\mathcal{A}^\infty = \mathcal{S} (\Z^n, \omega)$ and  $\mathcal{B}^\infty = \mathcal{S} (\Z^n, \omega_1)$ 
the dense sub-algebras of $\mathcal{A} $ and $\mathcal{B} $, respectively.
Let us consider the following $(2p+2q) \times (2p + q)$ real valued matrix:
$$
 T = \left( \begin{array}{cc}
T_{11} & 0\\
0  & I_q\\
T_{31}  & T_{32}
\end{array} \right), 
$$ 
where $T_{11}$ is the invertible matrix such that $T_{11}^tJ_0T_{11} = \theta_{11}$, $J_0
  := \left( \begin{array}{ccc}
0 & I_p\\
  -I_p  & 0\\
  \end{array} \right)$, $T_{31} = \theta_{21}$ and $T_{32}$ is the matrix obtained from $\theta_{22}$ replacing the lower diagonal entries by zero. 
  
 We also define  the following $(2p+2q) \times (2p + q)$ real valued matrix:
 $$
S = \left( \begin{array}{cc}
J_0(T_{11}^t)^{-1} & -J_0(T_{11}^t)^{-1}T_{31}^t\\
0  & I_q\\
0  & T^t_{32}
\end{array} \right).
$$ 
Let 
$$
 J = \left( \begin{array}{ccc}
J_0 & 0 & 0\\
0 & 0 & I_q\\
0 & -I_q & 0\\
\end{array} \right) 
$$ and $J'$ be the matrix obtained from $J$ by replacing the negative entries of it by 0. Note that $T$ and $S$ can be thought as maps $\R^p \times \R^{*p}\times \Z^{q} \rightarrow G$ (see the definition 2.1 of the embedding map in \cite{Li03}).  
Let $P'$ and $P''$ be the canonical projections of $G$ to $M$ and $\hat{M}$, respectively, and let
$$T':=P'\circ T, \quad T'':=P''\circ T, \quad  S':= P'\circ S, \quad S'':= P''\circ S.$$  Then the following formulas define a $\mathcal{A}^\infty$-$\mathcal{B}^\infty$ bimodule structure on $\mathcal{E}^\infty$:

\begin{equation} \label{eq:proj_mod_T}
(f U_l^\theta)(x)= e(\langle -T(l), J^{\prime} T(l)/2\rangle)\langle x, T^{\prime \prime}(l) \rangle f(x-T^{\prime}(l)), 
\end{equation} 

\begin{equation} \label{eq:proj_mod_inner}
\langle f,g \rangle_{\mathcal{A}^\infty}(l)= e(\langle-T(l), J^{\prime} T(l)/2\rangle)\int_{G} \langle x,-T^{\prime \prime}(l) \rangle g(x+T^{\prime}(l))\bar{f}(x) dx, 
\end{equation}

\begin{equation} \label{eq:proj_mod_Tprime}
(V_l^\theta  f)(x)= e(\langle-S(l), J^{\prime} S(l)/2\rangle)\langle x, -S^{\prime \prime}(l) \rangle f(x+S^{\prime}(l)), 
\end{equation} 

\begin{equation} \label{eq:proj_mod_inner_Tprime}
{}_{\mathcal{B}^\infty}\langle f,g \rangle(l)= e(\langle S(l), J^{\prime} S(l)/2\rangle)\int_{G} \langle x,S^{\prime \prime}(l) \rangle \bar{g}(x+S^{\prime}(l))f(x) dx, 
\end{equation}
 where $U_l^\theta, V_l^\theta$ denote the canonical unitaries with respect to the group element $l \in \Z^n$ in $\mathcal{A}^\infty $ and $\mathcal{B}^\infty$, respectively.

See Proposition 2.2 in \cite{Li03} for the following well-known result.

\begin{thm}[Rieffel]\label{thm:rieffel_proj_mod}
The smooth module $\mathcal{E}^\infty$, with the above structures, is an $\mathcal{A}^\infty$-$\mathcal{B}^\infty$  Morita equivalence bimodule  which can be extended to a strong Morita equivalence between $\mathcal{A}$ and $\mathcal{B}$. 

\end{thm}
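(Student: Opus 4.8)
The plan is to recognise $\mathcal{E}^\infty = \mathcal{S}(M)$ as the restriction, along the two embeddings $T$ and $S$ of $\R^p\times\R^{*p}\times\Z^q$ into the self-dual group $G = M\times\widehat{M}$ carrying its canonical symplectic pairing, of the classical Heisenberg imprimitivity bimodule of $G$, and then to verify the imprimitivity-bimodule axioms at the smooth level before passing to completions; that $T$ and $S$ are genuine group embeddings into $G$, so that all coordinates land in the correct factors, is the content of Definition~2.1 of \cite{Li03} and I take it as given. The first task is to check that \eqref{eq:proj_mod_T} defines a right action of $\mathcal{A}^\infty$: each operator $f\mapsto fU_l^\theta$ is, up to the unimodular phase $e(\langle -T(l),J'T(l)/2\rangle)$, the time--frequency shift on $\mathcal{S}(M)$ given by translation by $T'(l)\in M$ followed by modulation by $T''(l)\in\widehat{M}$, and the identity $U_l^\theta U_{l'}^\theta = \omega_\theta(l,l')\,U_{l+l'}^\theta$ then reduces to the Weyl commutation rule for such shifts together with the normalisations $T_{11}^t J_0 T_{11}=\theta_{11}$, $T_{31}=\theta_{21}$ and the prescribed triangular form of $T_{32}$, which account for the three blocks of $\theta$; the matrix $J'$ and the phase are exactly what convert the symmetric symplectic form into the chosen one-sided cocycle $\omega_\theta$. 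The same computation with $S$, $J_0(T_{11}^t)^{-1}$ and $\omega_1$ in place of $T$, $J_0$ and $\omega_\theta$ gives the left $\mathcal{B}^\infty$-action via \eqref{eq:proj_mod_Tprime}, and the two actions commute because $S$ has been built from $T$ precisely so that $T(\Z^n)$ and $S(\Z^n)$ are mutually symplectically orthogonal inside $G$ --- a finite matrix check.

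Next I would show that \eqref{eq:proj_mod_inner} and \eqref{eq:proj_mod_inner_Tprime} define $\mathcal{A}^\infty$- and $\mathcal{B}^\infty$-valued inner products. Rapid decay in $l$ of the coefficient sequences, hence their membership in $\mathcal{S}(\Z^n,\omega)$ and $\mathcal{S}(\Z^n,\omega_1)$, follows from the Schwartz property of $f,g$ together with properness of $l\mapsto T(l)$ and $l\mapsto S(l)$ on $\Z^n$; sesquilinearity, $\mathcal{A}^\infty$-linearity and the self-adjointness identities linking the two actions to the inner products are bookkeeping with the cocycle relations; and positivity of $\langle f,f\rangle_{\mathcal{A}^\infty}$ holds because its coefficient sequence is an $\omega$-positive-definite function on $\Z^n$, being assembled from a genuine Hilbert-space inner product.

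The only genuinely nontrivial step --- and the one I expect to be the main obstacle --- is the compatibility (associativity) identity ${}_{\mathcal{B}^\infty}\langle f,g\rangle\cdot h = f\cdot\langle g,h\rangle_{\mathcal{A}^\infty}$ for $f,g,h\in\mathcal{S}(M)$. Expanding both sides turns the claim into the equality of two sums of time--frequency translates, one of $h$ indexed by the lattice $S(\Z^n)$ and one of $f$ indexed by $T(\Z^n)$; this is an instance of the Poisson summation formula --- the fundamental identity of Gabor analysis --- which holds precisely because $T(\Z^n)$ and $S(\Z^n)$ are adjoint lattices for the symplectic form on $G$, a property designed into the definition of $S$ in terms of $T$. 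Absolute convergence of the series involved and the applicability of Poisson summation are exactly what force the restriction to the smooth sub-bimodule $\mathcal{E}^\infty = \mathcal{S}(M)$; here I would follow Rieffel's computation in \cite{ri88}, organised as in Proposition~2.2 of \cite{Li03}. Fullness of both inner products is then short: taking $f,g$ to be Gaussians on $M$ (or any element generating $\mathcal{S}(M)$ as a module), their inner products already span a dense ideal, since the associated rank-one operators have dense linear span.

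Finally, to pass to completions I would set $\|f\|_{\mathcal{E}} := \|\langle f,f\rangle_{\mathcal{A}^\infty}\|_{\mathcal{A}}^{1/2}$; the Cauchy--Schwarz inequality for $C^*$-valued inner products shows this is a norm for which the right $\mathcal{A}^\infty$-action and the $\mathcal{A}^\infty$-valued inner product are continuous, so the completion $\mathcal{E}$ is a right Hilbert $\mathcal{A}$-module, and the left $\mathcal{B}^\infty$-action extends by boundedness, estimated through the $\mathcal{B}^\infty$-valued inner product, with fullness inherited by the completion. Hence $\mathcal{E}$ is an $\mathcal{A}$--$\mathcal{B}$ imprimitivity bimodule and implements a strong Morita equivalence between $\mathcal{A}$ and $\mathcal{B}$; since $\mathcal{B}\cong\mathcal{K}_{\mathcal{A}}(\mathcal{E})$ is unital, $\mathcal{E}$ is automatically finitely generated and projective over $\mathcal{A}$, and symmetrically over $\mathcal{B}$.
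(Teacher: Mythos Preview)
Your proposal is correct and follows exactly the Rieffel--Li approach that the paper invokes; note, however, that the paper does not actually supply a proof of this theorem but simply refers the reader to Proposition~2.2 of \cite{Li03}. Your sketch is therefore a faithful expansion of the cited argument --- Heisenberg time--frequency shifts, adjoint-lattice associativity via Poisson summation, and completion --- rather than a different route.
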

Let $\mathcal{E}$ be the completion of $\mathcal{E}^\infty$ with respect to the  $C^*$-valued inner products given above. Now  $\mathcal{E}$ 
becomes a right  projective $A$-module which is also finitely generated (see the discussion preceding Proposition 4.6 of \cite{ELPW}).  
The projective module corresponding to $q=0$ is called the {\it Bott class}. Note that this Bott class appears only for even dimensional tori.

\begin{rmk}\label{rmk:trace_of_heisenberg}
	The trace of the module $\mathcal{E}$, which was computed by Rieffel \cite{ri88}, is exactly the absolute value of the pfaffian of the upper left $2p \times 2p$ corner of the matrix $\theta,$ which is $\theta_{11}$. Indeed, as \cite[Proposition 4.3, page 289]{ri88} says that trace of $\mathcal{E}$ is $|\deta \widetilde{T}|$, where 
$$
\widetilde{T}= \left( \begin{array}{cc}
T_{11} & 0\\
0  & I_q\\
\end{array} \right),
$$ the relation $T_{11}^tJ_0T_{11} = \theta_{11}$ and the fact $\deta (J_0) =1$ give the claim.
\end{rmk}

%\begin{wrn}
	%For sake of simplicity sometimes we also denote $\langle \hbox{-} , \hbox{-} \rangle$ just by $(\hbox{-}\cdot \hbox{-})$.
	%\end{wrn}

%%%%%%%%%%% Section %%%%%%%%%%%%
\section{A quick look into noncommutative orbifolds}

Let $W:=(a_{ij})$ be an $n \times n$ matrix of finite order with integer entries acting on $\Z^n$ and $F$ be the cyclic group generated by $W$. In addition, we assume that $W$ is a $\theta$-symplectic matrix as in section 2. Hence $F$ is a finite subgroup of $SP(n, \Z, \theta):= \{ A \in GL(n,\Z) : A^T\theta A = \theta$\}. By Lemma 2.1 of \cite{ELPW} we have $C^*(\Z^n\rtimes F, \omega_\theta ') = A_{\theta} \rtimes_\alpha F$ with respect to the action (\cite[Equation 2.6]{jh}) :
 \begin{equation}\label{eq:actionontori}
 	\alpha(U_i)= e(\sum_{k=2}^n\sum_{j=1}^{k-1}a_{ki}a_{ji}\theta_{jk})U_1^{a_{1i}}\cdots U_n^{a_{ni}},
 \end{equation} where $U_1,...,U_n$ are the generators of $A_\theta$.

Let us look into the case where $n=2$. Note that $SP(2, \Z, \theta) = SL(2, \Z)$. Finite cyclic subgroups of $SL(2,\Z)$ are up to conjugacy generated by the following 4 matrices:  
$$W_2
  := \left( \begin{array}{ccc}
-1 & 0\\
  0  & -1\\
  \end{array} \right) , \:  
	W_3
  := \left( \begin{array}{ccc}
-1 & -1\\
  1  & 0\\
  \end{array} \right), 
  $$
  
  $$ W_4
  := \left( \begin{array}{ccc}
0 & -1\\
  1  & 0\\
  \end{array} \right),
  \:  W_6
  := \left( \begin{array}{ccc}
0 & -1\\
  1  & 1\\
  \end{array} \right),
   $$ where the notation $W_r$ indicates that it is a matrix of order $r$.
   
   The actions of these matrices are considered already in \cite{ELPW}, where the authors constructed projective modules over the corresponding crossed products and used these projective modules to prove some classification results for these crossed products. 
   
   For $n\ge 3$ finding finite order matrix $W\in SP(n, \Z, \theta)$ is non-trivial. For $n=3$, there is only one such matrix ($-I_3$) acting on all $A_\theta$'s.  In \cite{jh} the authors found some $W$'s and associated actions for $n \ge 4$ such that the crossed products  are well defined. 
   
%%%%%%%%%%%%%%%%%%%%%%%%%%%%%%%%%%%%%%%%%%%%%%%%%%%%%%%%%%%%%%%%%%%%%%%%%%%%%%%%%%%%%%%%%%%%%%%%%
\section{Projective modules over noncommutative orbifolds}

   One natural question is how does one extend the projective modules over noncommutative tori to the aforementioned crossed products? Our main theorem addresses this question for the Bott classes.

In the following sections (except Section 7) we consider $n$ to be an even number, $n=2m$. 
Suppose $F = \langle W \rangle$ is a finite cyclic group acting on $\Z^n$. We want to build some projective modules over $C^*(\Z^n\rtimes F, \omega_\theta ')$. Note that $W$ needs to be a $\theta$-symplectic matrix, i.e $W^T\theta W = \theta$, as noted earlier.

In order to construct projective modules over $C^*(\Z^n\rtimes F, \omega_\theta ')$, we will use the so-called metaplectic representation of the symplectic matrix $W$.
When $\theta$ is the standard skew-symmetric matrix $J$ then $W$ is also a standard symplectic matrix. %We will first consider $\theta$ to be the standard skew-symmetric matrix $J$, then $A_J^\infty$ is the smooth commutative $n$-torus $C^\infty(\T^n)$. Then we will deal with the general case.   

We denote the group of all  $J$-symplectic matrices (also known as standard symplectic matrices) by $\mathcal{SP}(n)$, which is called the symplectic group. We refer to Chapter 2 of \cite{Go} for preliminaries on symplectic groups and their metaplectic representations. We recall the metaplectic action associated to the symplectic matrix $W$. Any symplectic matrix can be written as product of two free symplectic matrices (see page 38, \cite{Go}) which is by definition a symplectic matrix $$ \left( \begin{array}{ccc}
A & B\\
  C  & D\\
 
\end{array} \right) ,
$$
such that $\det(B) \neq 0$. Let $W$ to be a free symplectic matrix.
 We now associate to $W$ the {\it generating function}:
\begin{equation}\label{eq:gen} W(x,x') = \frac{1}{2}\langle DB^{-1} x, x\rangle-\langle B^{-1} x, x'\rangle + \frac{1}{2}\langle B^{-1}A x', x'\rangle,
\end{equation} when $x,x' \in \R^m$.

In what follows, for $y\in\Q$, we denote the complex number  $i^y$ for some choice of element in the range of $i^y$ for the multivalued function $i^z, z\in \C$. In other words, we fix a branch for $i^z$. 
\begin{dfn}
The {\it metaplectic operator} (metaplectic transformation) associated to $W$ on $\mathscr{S}(\R^m)$ is given by

$$\mathcal{F}_Wf(x) =   i^{s-\frac{m}{2}} \sqrt{|\det(B^{-1})|}\int_{\R^m} e(W(x,x'))f(x') dx';$$ %\left (\frac{1}{2\pi i} \right )} ^\frac{n}{2};
the integer $s$ (sometimes called Maslov index) corresponds to a choice of the argument $\arg$ of $\det B^{-1}$:

$$s\pi \equiv \arg \; (\det B^{-1})\quad \text{mod 2$\pi$.}  $$ 
\end{dfn}

These operators can be extended to $L^2(\R^m)$ giving unitary operators on $L^2(\R^m)$ (see page 81, \cite{Go}). We denote by $\mathcal{MP}(n)$ the group of metaplectic operators which is a subgroup of the  group of unitary operators of  $L^2(\R^m)$.
\begin{thm}

There exists an exact sequence:
\begin{center}
	\begin{tikzcd}
  0 \arrow{r} & \Z_2 \arrow{r} & \mathcal{MP}(n) \arrow{r}& \mathcal{SP}(n)           \arrow{r} & 0 ,
    \end{tikzcd}
    \end{center}
    where the map $\mathcal{MP}(n) \rightarrow \mathcal{SP}(n)$ is uniquely determined by the map $\mathcal{F}_W \rightarrow W$.
    	
\end{thm}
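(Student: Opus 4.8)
The plan is to prove the statement by identifying the kernel and image of the assignment $\mathcal{F}_W \mapsto W$ and showing it is a well-defined surjective group homomorphism with kernel $\Z_2$. First I would make the map precise: given a free symplectic matrix $W$, the operator $\mathcal{F}_W$ depends on the matrix $W$ together with the choice of Maslov index $s$ (equivalently, a choice of $\arg\det B^{-1}$), so the honest underlying object is a pair $(W,s)$; quotienting by the relation $s \sim s+4$ and observing that $(W,s)$ and $(W,s+2)$ give $\mathcal{F}_W$ and $-\mathcal{F}_W$, one sees that the set $\mathcal{MP}(n)$ of all metaplectic operators maps onto $\mathcal{SP}(n)$ by forgetting $s$. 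The key structural input, which I would cite from \cite{Go} (chapter 2, around page 81), is that the product of two metaplectic operators $\mathcal{F}_{W_1}\mathcal{F}_{W_2}$ is again a metaplectic operator projecting to $W_1 W_2$; this is essentially a Fresnel/Gaussian integral computation together with the cocycle identity for the Maslov/Keller–Maslov index, and I would invoke it rather than redo it. Since by the cited result (page 38 of \cite{Go}) every symplectic matrix is a product of two free symplectic matrices, this simultaneously shows $\mathcal{MP}(n)$ is closed under multiplication (hence is a group of unitaries) and that the projection $\pi:\mathcal{MP}(n)\to\mathcal{SP}(n)$ is a surjective homomorphism.

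Next I would compute the kernel. If $\mathcal{F}_W$ projects to the identity matrix $I$, then on generators $\mathcal{F}_W$ intertwines the Schrödinger representation with itself via the trivial symplectic automorphism, so by the Stone–von Neumann theorem $\mathcal{F}_W$ is a scalar multiple of the identity operator; unitarity forces this scalar to lie in $\T$. It then remains to see that the only scalars arising as metaplectic operators are $\pm 1$: this follows because the two lifts of $I\in\mathcal{SP}(n)$ are exactly $\Id$ and $-\Id$ (the $s$ and $s+2$ ambiguity), which one checks directly from the definition by taking, e.g., $W$ to be a composition of a free symplectic matrix with its inverse and tracking the phase $i^{s-m/2}$. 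Hence $\ker\pi \cong \Z_2$, generated by $-\Id$, and the sequence
\begin{center}
\begin{tikzcd}
0 \arrow{r} & \Z_2 \arrow{r} & \mathcal{MP}(n) \arrow{r}{\pi} & \mathcal{SP}(n) \arrow{r} & 0
\end{tikzcd}
\end{center}
is exact.

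The main obstacle I anticipate is the well-definedness and composition law: showing that $\mathcal{F}_{W_1}\mathcal{F}_{W_2}$ depends only on $W_1 W_2$ up to the sign ambiguity requires controlling how the Maslov index behaves under composition, i.e. verifying the relevant $2$-cocycle computed from Gaussian integrals takes values in $\{\pm 1\}$ and is a coboundary trivialized precisely by passing to the double cover. Rather than carry out this Fresnel-integral bookkeeping, I would state it as the content of the cited theorem in \cite{Go} and restrict the new work to (i) the Stone–von Neumann identification of the kernel with a subgroup of $\T$ and (ii) the elementary phase computation pinning that subgroup down to $\Z_2$. A secondary point worth a sentence is uniqueness of $\pi$: any group homomorphism $\mathcal{MP}(n)\to\mathcal{SP}(n)$ sending $\mathcal{F}_W\mapsto W$ for all free symplectic $W$ is determined on all of $\mathcal{MP}(n)$ because, again by the page-38 factorization, every element of $\mathcal{MP}(n)$ is a product of two operators of the form $\mathcal{F}_W$ with $W$ free.
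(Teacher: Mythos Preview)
Your proposal is correct and in fact goes well beyond what the paper does: the paper's entire proof of this theorem is the single line ``See page 84, \cite{Go}.'' Your sketch of the standard argument (composition law for metaplectic operators via Fresnel integrals and the Maslov index cocycle, Stone--von Neumann to identify the kernel as scalars, and the phase computation pinning the kernel down to $\{\pm 1\}$) is exactly the content of the cited result in de Gosson, so you are not taking a different route but rather unpacking the reference that the paper simply invokes.
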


\begin{proof}
	See page 84, \cite{Go}.
\end{proof}
	
	One also defines the circle extension of $\mathcal{SP}(n)$,  $\mathcal{MP}^c(n)$. This is   defined to be the group  $ \mathcal{MP}(n) \times_{\Z_2} \mathbb{S}^1: (\mathcal{MP}(n) \times \mathbb{S}^1) / \Delta(\Z_2),$ $\Delta(\Z_2)$ being the diagonal $\Z_2 \times \Z_2$ sitting inside $\mathcal{MP}(n) \times \mathbb{S}^1.$ This gives rise to the exact sequence 
	\begin{center}
		\begin{tikzcd}
  0 \arrow{r} & \mathbb{S}^1 \arrow{r} & \mathcal{MP}^c(n) \arrow{r}& \mathcal{SP}(n)           \arrow{r} & 0 ,
    \end{tikzcd}
    	\end{center} where $\mathbb{S}^1$ denotes the circle group.
    	
In the following, we shall often write $fW$ for $\mathcal{F}_W (f)$.

Following \cite[Section 3.2.2]{Go} the following matrices generate (as a group) all $J$-symplectic matrices: 

$$J
  := \left( \begin{array}{ccc}
0 & I\\
  -I  & 0\\
  \end{array} \right) , \:  M_L
  := \left( \begin{array}{ccc}
L & 0\\
  0  & (L^{T})^{-1}\\
  \end{array} \right), \: V_P
  := \left( \begin{array}{ccc}
I & 0\\
  P  & I\\
  \end{array} \right),
   $$
  for a symmetric $m \times m$ matrix $P$ and an invertible $m \times m$ matrix $L$. 
  
Following \cite[Section 7.1.2]{Go}, we write down the metaplectic operators  (up to some constant which will not matter in the proof) corresponding to $J$, $M_L$ and $V_P$:
  
 \begin{equation} \label{eq:fourier1}
(f\,J)(x)  =   \int_{\R^m} e(\langle -x,x' \rangle)f(x') dx'\end{equation}
\begin{equation} \label{eq:flip1}
(f\,M_L)(x)  = \sqrt{det(L)}f(L(x))\end{equation}
\begin{equation} \label{eq:otheraction1}
(f\,V_P)(x)  = e(\frac{1}{2}\langle Px,x \rangle)f(x).\end{equation}

Hence it suffices to check statements of multiplicative type about metaplectic transformations for the above three operators and also note that the Schwartz space is invariant under metaplectic transformations, see \cite[Corollary 63]{Go}.  For our result we always assume $\theta$  to be a non-degenerate matrix.   
\\

We recall the following proposition from \cite[Proposition 4.5]{ELPW}.

\begin{prp}\label{mainprop}
	Suppose $F$ is a finite group acting on a C*-algebra $A$ by the action $\alpha$. Also suppose that $\mathcal{E}$  is a finitely generated projective (right) $A$-module with a right action $T : F \rightarrow \Aut(\mathcal{E})$, written $(\xi, g) \rightarrowtail \xi T_g$, such that $\xi (T_g)a = (\xi\alpha_g(a)) T_g$ for all $\xi \in  \mathcal{E}, a \in A,$ and $g \in F$. Then $\mathcal{E}$ becomes a finitely generated projective $A\rtimes F$ module with action defined by
	
$$  \xi \cdot (\sum_{g \in F} a_g\delta_g) =  \sum_{g \in F} (\xi a_g)T_g $$.

Also, if we restrict the new module to $A$, we get the original $A$-module $\mathcal{E}$, with the action of $F$ forgotten.
\end{prp}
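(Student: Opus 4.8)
The plan is to verify the two claims of Proposition~\ref{mainprop} directly: first that the prescribed formula $\xi\cdot\bigl(\sum_{g\in F}a_g\delta_g\bigr)=\sum_{g\in F}(\xi a_g)W_g$ actually defines a right $A\rtimes F$-module structure on $\mathcal{E}$, and second that this module is finitely generated projective over $A\rtimes F$. For the first claim I would check compatibility with the convolution product on $A\rtimes F$: writing $x=\sum_g a_g\delta_g$ and $y=\sum_h b_h\delta_h$, one has $xy=\sum_{g,h}a_g\alpha_g(b_h)\delta_{gh}$, and I would compute both $(\xi\cdot x)\cdot y$ and $\xi\cdot(xy)$ using the intertwining relation $\xi(W_g)a=(\xi\alpha_g(a))W_g$ to move the $A$-action past $W_g$; the two expressions match after reindexing the double sum by $k=gh$. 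Linearity is immediate and the unit $1\cdot\delta_e$ acts trivially since $W_e=\id$. The $*$-compatibility needed to make this a Hilbert $C^*$-module (if one wants that refinement) follows from the same intertwining relation together with the fact that $T$ is a group action by module automorphisms.

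For the second claim, the strategy is to exhibit $\mathcal{E}$ as a direct summand of a free $A\rtimes F$-module. Since $\mathcal{E}$ is finitely generated projective over $A$, there is an idempotent $e\in M_N(A)$ and an isomorphism $\mathcal{E}\cong eA^N$ of right $A$-modules. I would transport the $F$-action $T$ through this isomorphism; equivariance of $T$ relative to $\alpha$ translates into a cocycle-type compatibility between $e$, the $F$-action on $M_N(A)$, and a unitary (or invertible) $N\times N$ matrix implementing $g\mapsto W_g$ on $A^N$. The key point is that $A^N\rtimes F\cong (A\rtimes F)^N$ as right $A\rtimes F$-modules via the Green--Julg-type identification, and that the equivariant idempotent $e$ gives rise to an idempotent $\tilde e\in M_N(A\rtimes F)$ (built from $e$ and the matrix implementing $W$) whose image is exactly $\mathcal{E}$ with the new action. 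Thus $\mathcal{E}\cong\tilde e(A\rtimes F)^N$ is finitely generated projective over $A\rtimes F$.

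The last sentence of the proposition — that restricting back to $A\subseteq A\rtimes F$ recovers the original module — is immediate from the formula: the inclusion $A\hookrightarrow A\rtimes F$ sends $a\mapsto a\delta_e$, and $\xi\cdot(a\delta_e)=(\xi a)W_e=\xi a$.

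The main obstacle will be the bookkeeping in the second claim: making precise the passage from ``$\mathcal{E}$ is a projective $A$-module carrying a compatible $F$-action'' to ``$\mathcal{E}$ is a projective $A\rtimes F$-module,'' i.e. the concrete construction of the idempotent $\tilde e$ over $M_N(A\rtimes F)$ from the equivariant data $(e, \{W_g\})$. One clean way to sidestep heavy matrix computations is to invoke the general principle (Green--Julg, as the introduction recalls) that an $F$-equivariant finitely generated projective $A$-module is the same thing as a finitely generated projective module over $A\rtimes F$; the content of the proposition is then just unwinding what the resulting $A\rtimes F$-action is, which is exactly the displayed formula. I would present it in this form, citing \cite{ELPW}, and only spell out the convolution-compatibility check in detail.
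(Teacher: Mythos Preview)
Your proposal is correct and aligns with the paper's approach: the paper's own proof consists entirely of the sentence ``See the proof of the proposition 4.5 \cite{ELPW},'' so your plan to verify the module axioms directly and then invoke the Green--Julg principle (with a citation to \cite{ELPW}) is exactly what the paper does, only with the details spelled out rather than deferred.
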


\begin{proof}
	This is exactly the construction of Green--Julg map. For a proof, see \cite[Proposition 4.5]{ELPW}.
\end{proof}

Now we are in the position to formulate our main theorem. Let $\mathscr{S}(\R^m)$ be as in Section 3.

\begin{thm} \label{main}

Let $W$ be the generator of the finite cyclic group $F$ acting on $\Z^n$ with $W^T\theta W = \theta$ and hence on $C^*\mathcal(\Z^n,\omega_{\theta})$. Then the metaplectic action of $W$ on $\mathscr{S}(\R^m)$ extends to an action on $\mathcal{E}$ such that $\mathcal{E}$ becomes an $F$-equivariantly finitely generated projective $C^*(\Z^n,\omega_{\theta})$ module and thus a finitely generated  projective module over $C^*(\Z^n,\omega_{\theta}) \rtimes F$.
\end{thm}
\begin{proof}  
We divide the proof in two parts.\\
{\bf First part:} (the case $\theta = -J$): Recall that from \eqref{eq:proj_mod_T} for the choice of $T
  := \left( \begin{array}{ccc}
-I & 0\\
  0  & I\\
  \end{array} \right)$  the action of $\mathscr{S}(\Z^n, \omega_{-J}) = C^\infty(\T^n)$ on $\mathscr{S}(\R^m)$  is given by the following: 
 %m_{12}(\{Red,Blue\})&=K'*[m_1 (\{Red,Blue\})*m_2(\{Red,Blue\})\\
                     % &\qquad +m_1(\{Red,Blue\})*m_2(\{Red,Blue,Green\})\\
                     % &\qquad +m_2(\{Red,Blue\})*m_1(\{Red,Blue,Green\})]
%\begin{align*}
%f.U_{x_1,x_2,\dots,x_n}^p(y_1,y_2,\dots,y_m) &= e(\frac{x_1x_{m+1}+\dots+ x_mx_n}{2})\\  &\qquad e(y_1x_{m+1}+y_2x_{m+2}+\dots + y_mx_n)\\
 %&\qquad f(y_1+x_1,y_2+x_2,\dots,y_{i}+x_i,\dots,y_m+x_m) 
%\end{align*}

\begin{equation}
	fU_{i}^p(y_1,y_2,\dots,y_m) = f(y_1,y_2,\dots,y_{i}+p,\dots,y_m) , \: \text{if} \:    i\leq m,\end{equation}
\begin{equation}fU_{i}^p(y_1,y_2,\dots,y_m) = e(py_{i-m})f(y_1,y_2,\dots,y_m) , \: \text{if} \:    i> m, \end{equation}
where $U_i$'s are the generators of $n$-dimensional smooth torus $C^\infty(\T^n)$.
[Note that $\theta = -J$ is chosen instead of $\theta = J$ to keep the formulas somewhat similar to \cite{ELPW}]. Let $\alpha_{W}$ denotes the action of the matrix $X$ on $\mathscr{S}(\Z^n)$: $\alpha_{W}(\phi) (x) = \phi(W^{-1}x)$.
According to Proposition \ref{mainprop}, we still have to check the following equation to complete the proof:

\begin{equation} \label{eq:main}
f(W)\phi = (f\alpha_W(\phi))W,
\end{equation}
for all $f\in \mathscr{S}(\R^m)$ and $\phi \in \mathscr{S}(\Z^n,\omega_{-J})$, which will then imply that $\mathcal{E}$ becomes $F$-equivariant. 
Also, since $\mathscr{S}(\Z^n,\omega_{-J})$ is generated by $U_1,U_2,\dots, U_n$, it is enough to check \eqref{eq:main} for $\phi = U_1,U_2,\dots, U_n$.

So we are left with checking the following equations:
\begin{equation} \label{eq:fourier0}
fJU_i = (f\alpha_J(U_i))J,\end{equation}
\begin{equation} \label{eq:flip0}
fM_LU_i = (f\alpha_{M_L}(U_i))M_L,\end{equation}
\begin{equation} \label{eq:otheraction0}
fV_PU_i = (f\alpha_{V_P}(U_i))V_P ,\end{equation}
for all $1\leq i \leq n$. 

First we check the equations  \eqref{eq:fourier0},  \eqref{eq:flip0} and  \eqref{eq:otheraction0}  for $1\leq i \leq m$. 
The left hand side (LHS) of  \eqref{eq:fourier0} is

{\tiny\begin{align*}
(fJU_i)(x_1,x_2,\dots, x_m) &= (fJ)(x_1,x_2,\dots,x_i+1\dots, x_m) ,
%\intertext{where we used the relation $a^2-b^2=(a+b)(a-b)$, and we can continue:}
\\ &= \int_{\R^m} e(-\langle (x_1,x_2,\dots,x_i+1\dots, x_m), (x'_1,x'_2,\dots, x'_m)\rangle)f(x') dx',
\\
&= \int_{\R^m} e(-\langle (x_1,x_2,\dots, x_m), (x'_1,x'_2,\dots, x'_m)\rangle).e(-x'_{i})f(x') dx';
\end{align*}}
and the right hand side (RHS):

{\tiny
\begin{align*}
(f\alpha_J(U_i))J(x_1,x_2,\dots, x_m) &= \int_{\R^m} e(-\langle (x_1,x_2,\dots, x_m), (x'_1,x'_2,\dots, x'_m)\rangle)f\alpha_J(U_i)(x') dx',
\\&= \int_{\R^m} e(-\langle (x_1,x_2,\dots, x_m), (x'_1,x'_2,\dots, x'_m)\rangle)(fU_{i+m}^{-1})(x') dx',
%\intertext{where we used the relation $a^2-b^2=(a+b)(a-b)$, and we can continue:}
\\&= \int_{\R^m} e(-\langle (x_1,x_2,\dots, x_m), (x'_1,x'_2,\dots, x'_m)\rangle).e(-x'_{i})f(x') dx'.
\end{align*}}

{\setlength{\parindent}{0cm}
Hence we have proved \eqref{eq:fourier0}.}
The LHS of \eqref{eq:flip0} equals  
\begin{align*}
(fM_LU_i)(x_1,x_2,\dots, x_m) &= (fM_L)(x_1,x_2,\dots,x_i+1,\dots , x_m),
%\intertext{where we used the relation $a^2-b^2=(a+b)(a-b)$, and we can continue:}
\\ &= \sqrt{det(L)}f(L(x_1,x_2,\dots,x_i+1,\dots , x_n));
\end{align*}
and the RHS is
\begin{align*}
(f\alpha_{M_L}(U_i))M_L(x_1,x_2,\dots, x_m) &= \sqrt{det(L)}(f\alpha_{M_L}(U_i))L(x_1,x_2,\dots, x_m),
%\intertext{where we used the relation $a^2-b^2=(a+b)(a-b)$, and we can continue:}
\\ &= \sqrt{det(L)}f(L(x_1,x_2,\dots, x_m)+L(x_i)),
\\
&= \sqrt{det(L)}f(L(x_1,x_2,\dots,x_i+1,\dots , x_m)).
\end{align*}
Hence we have demonstrated \eqref{eq:flip0}.
We have for the LHS \eqref{eq:otheraction0}: 
\begin{align*}
(fV_PU_i )(x_1,x_2,\dots, x_m) &= (fV_P)(x_1,x_2,\dots,x_i+1,\dots, x_m),
%\intertext{where we used the relation $a^2-b^2=(a+b)(a-b)$, and we can continue:}
\\& = e(\frac{1}{2}\langle P(x_1,x_2,\dots,x_i+1,\dots, x_m),\cdots
\\  &\qquad    (x_1,x_2,\dots,x_i+1,\dots, x_m)\rangle)\cdots \\& \qquad  f(x_1,x_2,\dots,x_i+1,\dots, x_m),
\end{align*}and the RHS is
\begin{align*}
(f\alpha_{V_P}(U_i))V_P(x_1,x_2,\dots, x_m) &= e(\frac{1}{2}(Px \cdot x))(f\alpha_{V_P}(U_i))(x),
%\intertext{where we used the relation $a^2-b^2=(a+b)(a-b)$, and we can continue:}
\\ &= e(\frac{1}{2}\langle P(x_1,x_2,\dots,x_i+1,\dots, x_m),\cdots
\\ &\qquad   (x_1,x_2,\dots,x_i+1,\dots, x_m)\rangle)\cdots \\ &\qquad   f(x_1,x_2,\dots,x_i+1,\dots, x_m).
\end{align*}
Hence we have shown \eqref{eq:otheraction0}.

 Now, let $m< i \leq n$. We check the equations  \eqref{eq:fourier0},  \eqref{eq:flip0} and  \eqref{eq:otheraction0} for these values of $i$ . 
\\ For \eqref{eq:fourier0} the LHS is 
\begin{align*}
(fJU_i)(x_1,x_2,\dots, x_m) &= e(x_{i-m})(fJ)(x_1,x_2,\dots, x_m),
%\intertext{where we used the relation $a^2-b^2=(a+b)(a-b)$, and we can continue:}
\\ &=  e(x_{i-m}) \int_{\R^m} e(-\langle x,x' \rangle)f(x') dx';
\end{align*}
and the RHS 
\begin{tiny}
\begin{align*}
(f\alpha_J(U_i))J(x_1,x_2,\dots, x_m) &=  \int_{\R^m} e(-\langle x,x' \rangle)(f\alpha_J(U_i))(x') dx',
\\ &= \int_{\R^m} e(-\langle x,x' \rangle)(f(U_{i-m}))(x') dx',
\\&= \int_{\R^m} e(-\langle x,x' \rangle)f(x'_1,x'_2,\dots,x'_{i-m}+1,\dots , x'_m) dx',
%\\& \intertext{we now use simple change of variable}
\\&= e(x_{i-m}) \int_{\R^m} e(-\langle x,x' \rangle)f(x'_1,x'_2,\dots,x'_{i-m},\dots , x'_m) dx', 
%\\& \text{(using change of variable)}
\\ &=  e(x_{i-m}) \int_{\R^m} e(-\langle x,x' \rangle)f(x') dx'.
\end{align*}
\end{tiny}

{\setlength{\parindent}{0cm}Hence we have proved \eqref{eq:fourier0}.}
\\ For \eqref{eq:flip0}, the LHS
\begin{align*}
(fM_LU_i)(x_1,x_2,\dots, x_m) &= e(x_{i-m})(fM_L)(x_1,x_2,\dots, x_m),
\\
&= \sqrt{det(L)}e(x_{i-m})f(L(x_1,x_2,\dots, x_m));
\end{align*}
and the RHS 

\begin{small}
\begin{align*}
(f\alpha_{M_L}(U_i))M_L(x_1,x_2,\dots, x_m) &= \sqrt{det(L)}(f\alpha_{M_L}(U_i))(L(x_1,x_2,\dots, x_m)),
%\intertext{where we used the relation $a^2-b^2=(a+b)(a-b)$, and we can continue:}
\\ &= \sqrt{det(L)}e(\langle (L^{-1})^T(e_{i-m}), L(x_1,x_2,\dots, x_m)\rangle) \cdots \\ & \qquad f(L(x_1,x_2,\dots, x_m)),
\\
&= \sqrt{det(L)}e(\langle e_{i-m}, (L^{-1}L)(x_1,x_2,\dots, x_m)\rangle) \cdots
\\ & \qquad f(L(x_1,x_2,\dots, x_m)),
\\
&= \sqrt{det(L)}e(x_{i-m})f(L(x_1,x_2,\dots, x_m)). 
\end{align*} \end{small} 

{\setlength{\parindent}{0cm}Thus \eqref{eq:flip0} is verified.}
\\ For \eqref{eq:otheraction0}, the LHS 
\begin{align*}
(fV_PU_i )(x_1,x_2,\dots, x_m)&= e({x_{i-m}})(fV_P)(x_1,x_2,\dots, x_m),
%\intertext{where we used the relation $a^2-b^2=(a+b)(a-b)$, and we can continue:}
\\ &= e({x_{i-m}})e(\frac{1}{2} \langle Px,x \rangle)f(x);
\end{align*}
and the RHS:
\begin{align*}
(f\alpha_{V_P}(U_i))V_P(x_1,x_2,\dots, x_m)  &= e(\frac{1}{2}\langle Px,x \rangle)(f\alpha (U_i)(x),
%\intertext{where we used the relation $a^2-b^2=(a+b)(a-b)$, and we can continue:}
\\ &= e(\frac{1}{2}\langle Px,x \rangle)(fU_i)(x),
\\
&= e({x_{i-m}})e(\frac{1}{2} \langle Px,x \rangle)f(x).
\end{align*}
Hence we have proved \eqref{eq:otheraction0}.

Now we have the following diagram:
\begin{center}

\begin{tikzcd}
                &                                 &                                   & F                 \arrow{d}          &   \\
    0 \arrow{r} & \Z_2 \arrow{r}   & \mathcal{MP}(n)                 \arrow{r}        & \mathcal{SP}(n)           \arrow{r} & 0 .\\
    
\end{tikzcd}
\end{center}
In the above diagram it is not assured that the inclusion $F \hookrightarrow \mathcal{SP}(n)$ lifts to an inclusion $F \hookrightarrow \mathcal{MP}(n)$. Since $F$ is cyclic the following lift is always possible:

\begin{center}

\begin{tikzcd}
                &                                 &                                   & F  \arrow[ld, dashrightarrow]               \arrow{d}          &   \\
    0 \arrow{r} & \mathbb{S}^1 \arrow{r}   & \mathcal{MP}^c(n)                 \arrow{r}        & \mathcal{SP}(n)           \arrow{r} & 0 ,\\
    
\end{tikzcd}

\end{center}
where $\mathcal{MP}^c(n)$ is the circle extension of $\mathcal{SP}(n)$. Indeed, for the generator $W \in F$, we can choose a scalar $z \in \T$ to get that the order of the operator $z\cdot \mathcal{F}_W \in \mathcal{MP}^c(n)$ is same as the order of the element $W \in F$. Hence the  inclusion $F \hookrightarrow \mathcal{MP}^c(n)$ gives the required action of $W$ on $\mathscr{S}(\R^m)$.
\\~\\
{\bf Second part (the general case):}

Let $\theta$ be a general non-degenerate skew-symmetric matrix. In this case $W_\theta^T\theta W_\theta = \theta$. We recall the construction of the projective modules in this case. Since $\theta$ is non-degenerate, there exists an invertible matrix $T$ such that $T^TJT = \theta$. Recall that the action of $U_l^\theta$ (for $l \in \Z^n$) on $\mathscr{S}(\R ^m)$ is defined by 

\begin{equation} \label{theTmap}
(fU_l^\theta)(x)= e((-T(l)\cdot J^{\prime} T(l)/2))e(\langle x, T^{\prime \prime}(l) \rangle) f(x-T^{\prime}(l)). \end{equation}.

First we note that $W :=TW_\theta T^{-1}$ is a $J$-symplectic matrix (a matrix $A$ is $J$-symplectic if $A^T JA = J$). Thus we can define 
$$fW=f(TW_\theta T^{-1}) := \mathcal{F}_{TW_\theta T^{-1}}(f)$$ 
for $f\in \mathscr{S}(\R^m)$ and $fW_\theta$ to be the function $fW$. Consequently, in this case we have to check the following equation:

\begin{equation}\label{main-general}
(fW_\theta)U_l^\theta(x) = (f\alpha_{W_\theta}(U_l^\theta))W_\theta(x), \hspace{.2cm} x\in\R^m.
	\end{equation}

\begin{align*}
(fW_\theta)U_l^\theta(x) &= e(\langle-T(l), J^{\prime} T(l)/2\rangle)e(\langle x, T^{\prime \prime}(l) \rangle) (fW_\theta)(x-T^{\prime}(l)) ,
%\intertext{where we used the relation $a^2-b^2=(a+b)(a-b)$, and we can continue:}
\\ &= e(\langle-T(l), J^{\prime} T(l)/2\rangle)e(\langle x, T^{\prime \prime}(l) \rangle) (fW)(x-T^{\prime}(l)),
\\ &= e(\langle-T(l), J^{\prime} \Id(Tl)/2\rangle)e(\langle x, \Id^{\prime \prime}(Tl) \rangle) (fW)(x-\Id^{\prime}(Tl)),
\\ &= (fW)U_{Tl}^J(x),
\\ & \underset{}{=} (f\alpha_W(U_{Tl}^J))W(x) \qquad  \text{(using \eqref{eq:otheraction0})}
\end{align*}
and the RHS 
\begin{align*}
(f\alpha_{W_\theta}(U_l^\theta))W_\theta(x)  &= (f\alpha_{W_\theta}(U_l^\theta))W(x),
%\intertext{where we used the relation $a^2-b^2=(a+b)(a-b)$, and we can continue:}
\\ &= \int_{\R^m} e(W(x,x'))(f\alpha_{W_\theta}(U_l^\theta))(x') dx',
\\
&= \int_{\R^m} e(W(x,x'))(f(U_{W_\theta(l)}^\theta))(x') dx',
\\
&= \int_{\R^m} e(W(x,x'))(f(U_{W(Tl)}^J))(x') dx', \qquad  \text{(using \eqref{eq:change})} 
\\ &= \int_{\R^m} e(W(x,x'))(f\alpha_W(U_{Tl}^J))(x') dx',
%&= \int_{\R^m} e^{iW(x,x')}(e(-T(W_\theta(l)).J^{\prime} T(W_\theta(l))/2)\langle x', T^{\prime \prime}(W_\theta(l)) \rangle f(x'-T^{\prime}(W_\theta(l)))) dx'.
\end{align*}
where
\begin{equation}\label{eq:change}
(f(U_{W_\theta(l)}^\theta))(x') = (f(U_{W(Tl)}^J))(x'), \qquad  x'\in\R^m
	\end{equation}
because of the fact:

{\tiny\[ (f(U_{W_\theta(l)}^\theta))(x')  = e(\langle-T(W_\theta(l)), J^{\prime} T(W_\theta(l))/2\rangle)e(\langle x', T^{\prime \prime}(W_\theta(l)) \rangle) f(x'-T^{\prime}(W_\theta(l))),\]}
which is equal to
{\tiny
\begin{eqnarray*}
&&e(\langle-T(W_\theta(T^{-1}Tl)), J^{\prime} T(W_\theta(T^{-1}Tl))/2\rangle)e(\langle x', T^{\prime \prime}(W_\theta(T^{-1}Tl)) \rangle) f(x'-T^{\prime}(W_\theta(T^{-1}Tl))),\\
&=&e(\langle-(TW_\theta T^{-1})(Tl), J^{\prime} (TW_\theta T^{-1})(Tl)/2\rangle)e(\langle x', (TW_\theta T^{-1})''(Tl) \rangle)  f(x'-(TW_\theta T^{-1})'(Tl)),\\
&=&e(\langle-W(Tl), J^{\prime} W(Tl)/2\rangle)e(\langle x', \Id''(W(Tl)) \rangle)  f(x'-\Id'(W(Tl))),\\
&=&(f(U_{W(Tl)}^J))(x').
\end{eqnarray*}}

We finish the proof with the compatibility of the action with the $\langle .,. \rangle_{\mathcal{A}^\infty}$ as defined in \eqref{eq:proj_mod_inner}:
\begin{equation*}
  \langle fW_\theta,gW_\theta \rangle_{\mathcal{A}^\infty}=\alpha_{W_\theta^{-1}}(\langle f,g \rangle_{\mathcal{A}^\infty}).
\end{equation*}
Replacing $f$ by $fW_\theta$, it suffices to check:
\begin{equation}\label{eq:comp_inner}
   \langle f,gW_\theta \rangle_{\mathcal{A}^\infty}=\alpha_{W_\theta^{-1}}(\langle fW_\theta^{-1},g \rangle_{\mathcal{A}^\infty}).
\end{equation}
The argument is based on some observations: (i) the 
explicit description of $\langle.,.\rangle_{\mathcal{A}^\infty}$ in terms of the right action of 
$\mathcal{A}^\infty$ on $\mathscr{S}(\R^m)$:  
\[\langle f,g \rangle_{\mathcal{A}^\infty}(l)=\langle gU^\theta_{-l},f\rangle_{L^2}%=\langle g, fU^\theta_l\rangle_{L^2}
,\]
for $\langle f,g\rangle_{L^2}=\int_{\R^m}f(x)\overline{g(x)}dx$, and (ii) the relations:
\[\alpha_{W_\theta^{-1}}(\langle f,g \rangle_{\mathcal{A}^\infty})(l)=\langle g\alpha_{W_\theta}(U^\theta_{-l}),f\rangle_{L^2}%=\langle g,f\alpha_{W_\theta^{-1}}(U^\theta_l) \rangle_{L^2}
.\]

The realization of $\langle.,.\rangle_{\mathcal{A}^\infty}$ in terms of the right action allows us to use equation \eqref{main-general}: 
\[(fW_\theta)U_l^\theta(x) = (f\alpha_{W_\theta}(U_l^\theta))W_\theta(x), \hspace{.2cm} x\in\R^m\] in the proof of \eqref{eq:comp_inner}:
\begin{eqnarray*}
 \langle f,gW_\theta \rangle_{\mathcal{A}^\infty}(l)&=&\langle (gW_\theta)U^\theta_{-l},f\rangle_{L^2},\\
                                                 &=&\int_{\mathbb{R}^m} (g\alpha_{W_\theta}(U_{-l}^\theta)W_\theta(x))\overline{f(x)}dx,\\
																								 &=&\int_{\mathbb{R}^m} (g\alpha_{W_\theta}(U_{-l}^\theta))(x)\overline{(fW_\theta^{-1})(x))}dx,\\
																								 &=&\alpha_{W_\theta^{-1}}(\int_{\mathbb{R}^m} (gU_{-l}^\theta)(x)\overline{(fW_\theta^{-1})(x))}dx),\\
																								 &=&\alpha_{W_\theta^{-1}}(\langle fW_\theta^{-1} ,g\rangle_{\mathcal{A}^\infty}),
\end{eqnarray*}
which is the desired identity.
\end{proof} 
Note that we have always written the operator $T_W$ up to some constant of modulus one, which is not essential for our discussion.

%%%%%%%%%%%%%%%%%%%%%%%%%%%%%% Section %%%%%%%%%%%%%%%%%%%%%%%%%%%%%%%%%%%%%%%%%%%%%%%%%%%%%%%%%%%
\section{The 2-dimensional case - revisited}

The results for the 2-dimensional case \cite{ELPW} are revisited from the perspective of metaplectic transformations. As mentioned before, there are up to conjugation four matrices of  finite order in $\mathrm{SL}_2(\Z)$ generating $\Z_2, \Z_3, \Z_4, \Z_6$. For  the $\Z_2$ action on $\mathscr{S}(\R)$, given by $f \rightarrow \tilde{f}$, where $\tilde{f}(x) = f(-x)$, the corresponding module, called the flip module, over $A_\theta \rtimes \Z_2$ is quite well studied by Walters \cite{wa01-2}. In the next section we discuss in more detail flip modules in the higher-dimensional setting. The $\Z_4$ action is given by the Fourier automorphism $f \rightarrow \tilde{f}$ where $\tilde{f}(x) =  \int_{\R} e(\langle x,x' \rangle)f(x')dx'$. Walters has studied these modules extensively and among other things he computed the Chern character for the flip modules and Fourier modules. The $\Z_3$ and $\Z_6$ actions are similar so we only treat the $\Z_6$ action.

The cyclic group $\Z_6$ is generated by the matrix  $W_6
  := \left( \begin{array}{ccc}
1 & -1\\
  1  & 0\\
  \end{array} \right)$ that we denote by $W$.  Note that this $W_6$ slightly differs from $W_6$ from section 4. We choose this $W_6$ to keep the final formula similar to the formula for $W_6
  := \left( \begin{array}{ccc}
0 & -1\\
  1  & 1\\
  \end{array} \right)$ in \cite{ELPW}. One should note that the action of finite group on the projective module as in Proposition \ref{mainprop} is not unique.
  
     The generating function associated to $W = W_6$ is given by 
  
  $$W(x,x') = xx' - \frac{1}{2}x'^2,$$ 
 which follows from \eqref{eq:gen}.
  The corresponding metaplectic transformation (for the choice $s=1$) is
  $$\mathcal{F}_W(f)(x) =   \sqrt{i}\int_{\R} e(xx' - \frac{1}{2}x'^2)f(x') dx',\,  f\in \mathscr{S}(\R)$$
  
  The following proposition is due to Walters:
  
  \begin{prp}
  	$$(\mathcal{F}_W)^6 = - I.$$
  \end{prp}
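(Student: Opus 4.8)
The plan is to verify the relation $(\mathcal{F}_W)^6 = -\Id$ by a direct computation, exploiting the fact that $\mathcal{F}_W$ is (up to the fixed scalar $\sqrt{i}$) an integral operator with an explicitly given Gaussian-times-exponential kernel, so that composing it with itself amounts to carrying out Gaussian integrals in one real variable. First I would record the kernel: writing $K(x,x') = \sqrt{i}\, e(xx' - \tfrac12 x'^2)$, the $k$-fold composite $(\mathcal{F}_W)^k$ has kernel $K_k(x,x')$ obtained by iterated integration, $K_{k}(x,x') = \int_{\R} K_{k-1}(x,y) K(y,x')\, dy$. Each such step is a one-dimensional Fresnel/Gaussian integral of the form $\int_{\R} e(a y^2 + b y + c)\, dy$, which evaluates (with the branch of $i^{1/2}$ fixed as in the paper's convention for $i^z$) to a constant times $e$ of a new quadratic form in the remaining variables; tracking the scalar factors and the quadratic forms through the iteration is the bulk of the work.

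An alternative, more conceptual route is available and worth mentioning: by Theorem 5.x (the exact sequence $0 \to \Z_2 \to \mathcal{MP}(n) \to \mathcal{SP}(n) \to 0$ specialized to $n=2$, i.e. $m=1$) the map $\mathcal{F}_W \mapsto W$ is a group homomorphism modulo the kernel $\{\pm\Id\}$, and since $W = W_6$ has order $6$ in $\mathcal{SP}(1) = \mathrm{SL}_2(\R)$, we get $(\mathcal{F}_W)^6 \in \{\pm\Id\}$ for free; it then only remains to decide the sign. That sign is exactly the kind of Maslov-index bookkeeping that one cannot avoid, so even on this route one must compute at least the scalar part of one composite carefully. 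Concretely, I would compute $(\mathcal{F}_W)^6$ on a single well-chosen test function — the standard Gaussian $g(x) = e^{-\pi x^2}$ is ideal, since it is (essentially) an eigenfunction of Fourier-type operators and the intermediate integrals stay Gaussian — and read off whether the eigenvalue is $+1$ or $-1$. Because the $+1$ versus $-1$ ambiguity is insensitive to the overall normalization of $\mathcal{F}_W$, the "up to some constant" caveat in the paper's formulas for the generators $J$, $M_L$, $V_P$ does no harm here.

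For the first, fully explicit route, the organizing step is to decompose $W_6$ as a product of the standard generators $J$, $M_L$, $V_P$ for which the metaplectic operators are given in \eqref{eq:fourier1}, \eqref{eq:flip1}, \eqref{eq:otheraction1}; since in dimension $m=1$ we have $J^4 = \Id$ in $\mathcal{SP}(1)$ but $\mathcal{F}_J^4$ picks up a sign (the classical fact that the Fourier transform has order $4$ with $\mathcal{F}^2 = $ parity), the sign in $(\mathcal{F}_W)^6 = -\Id$ will emerge from exactly this $\Z_2$-discrepancy, accumulated along the word in the generators representing $W_6^6 = \Id$. This makes precise why the answer is $-\Id$ rather than $+\Id$: the lift $W \mapsto \mathcal{F}_W$ chosen via the generating function and Maslov index $s=1$ does not respect the order-$6$ relation on the nose, and the obstruction is the nontrivial element of $\Z_2 = \ker(\mathcal{MP}(1) \to \mathcal{SP}(1))$.

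The step I expect to be the main obstacle is the careful tracking of the branch of $\sqrt{i}$ (equivalently, the Maslov/Gauss-sum phases) through either the six-fold iteration or the generator decomposition: every Gaussian integral $\int_\R e(a y^2 + \cdots)\, dy$ contributes a factor whose argument depends on $\operatorname{sgn}(a)$, and it is the cumulative sum of these signs — not the moduli, which are forced by unitarity — that determines whether one lands on $+\Id$ or $-\Id$. Getting this phase accounting exactly right, consistently with the fixed choice $s=1$ in the definition of $\mathcal{F}_W$ and with the chosen branch of $i^z$, is the delicate part; everything else is routine Gaussian calculus in one variable. (Since this proposition is attributed to Walters, one can also simply cite his computation for the sign and present the integral iteration as a self-contained re-derivation.)
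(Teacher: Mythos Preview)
The paper does not actually prove this proposition: it simply states it as ``due to Walters'' and immediately moves on to the modification $i^{-1/3}\mathcal{F}_W$. So there is no proof in the paper to compare against --- the paper treats this as a cited fact.

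Your proposal therefore goes well beyond what the paper does. Both routes you sketch are sound: the metaplectic exact sequence argument (Theorem 5.2 in the paper) cleanly reduces the problem to a sign determination, and evaluating on the Gaussian $e^{-\pi x^2}$ is the natural way to pin down that sign. Your diagnosis of the main obstacle --- careful phase bookkeeping consistent with the choice $s=1$ and the fixed branch of $i^z$ --- is exactly right, and this is presumably why the paper defers to Walters rather than redoing the computation. If you want to match the paper's level of detail, a one-line citation to Walters suffices; if you want an honest self-contained argument, your second route (exact sequence plus one Gaussian evaluation) is the cleanest.
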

  We modify the operator $\mathcal{F}_W$ to $i^{-\frac{1}{3}}\mathcal{F}_W$, which amounts to including the Maslov index of the transformation. Then $\mathcal{F}_W^6= I$. The corresponding projective module over $A_J \rtimes \Z_6$ is called the hexic module by Walters, where $J= \left( \begin{array}{ccc}
0 & 1\\
  -1  & 0\\
  \end{array} \right)$ is the standard symplectic form on $\R^2$. For a general $A_\theta$, choosing $T
  := \left( \begin{array}{ccc}
-\theta & 0\\
  0  & 1\\
  \end{array} \right)$, we get from the main theorem: 
   $$\mathcal{F}_{W_{\theta}}(f)(x) =   i^{\frac{1}{6}}\theta^{-\frac{1}{2}}\int_{\R} e(\frac{1}{2\theta}(2xx' -x'^2))f(x') dx',  f\in \mathscr{S}(\R);$$
   which is the exactly the formula for the $\Z_6$ action considered in \cite{ELPW}.
%%%%%%%%%%%%%%%%%%%%%%%%%%%%%%%%%%% Section %%%%%%%%%%%%%%%%%%%%%%%%%%%%%%%%%%%%%%%%%%%%%%%%%
%%%%%%%%%%%%%%%%%%%%%%%%%%%%%%%%%%%%%%%%%%%%%%%%%%%%%%%%%%%%%%%%%%%%%%%%%%%%%%
%%%%%%%%%%%%%%%%%%%%%%%%%%%%%%%%%%%%%%%%%%%%%%%%%%%%%%%%%%%%%%%%%%%%%%%%%%%%%%

\section{$\K$-theory of the \cp\ of $n$-dimensional \nt\ with the flip action }\label{Sec:ntflip}

%The $\Z_2$ action on  $\mathscr{S}(\R^m)$ for an $n=2m$ as before) is given by $f \rightarrow \tilde{f}$ where $\tilde{f}(x) = f(-x)$.
 We consider the $n \times n$ matrix $W = -I_n$ which generates the two element group. Suppose this group acts on a $n = 2p + q$-dimensional noncommutative torus with respect to the parameter $\theta$ with $\theta 
  := \left( \begin{array}{ccc}
\theta_{11} & \theta_{12}\\
\theta_{21} & \theta_{22}\\
  \end{array} \right)$, $\theta_{11}$ being the left $2p \times 2p$ corner, which amounts to the condition $W^T\theta W = \theta$ that holds in this case. We call this action the flip action.

  We define the following operator on  $\mathscr{S}(\R^p \times \Z^q)$ with respect to $W$:
  \begin{equation}
	T_W (f)(x,t):= f(-x,-t).\end{equation}
 $\mathscr{S}(\R^p \times \Z^q)$ with respect to this action is a $A_\theta ^\infty \rtimes \Z_2$ module which can be completed to an $A_\theta \rtimes \Z_2$ module. For a totally irrational $3 \times 3$ skew-symmetric matrix $\theta$ (the definition of which we will introduce shortly),  we will see that any generator of $\K_0(A_{\theta})$ can be given by completions of modules of the type $\mathscr{S}(\R^p \times \Z^q)$.   Hence it will follow that all the generators of $\K_0(A_{\theta})$ can be extended to provide classes in $\K_0$ of  $A_\theta \rtimes \Z_2$.  We will also show that $\K$-theory classes of these modules can be extended to a generating set   of $\K_0(A_{\theta}\rtimes \Z_2)$ for 3-dimensional noncommutative tori. Our results will show that this should also be the case for the general $n$-dimensional case, but at this moment we are unable to compute the generators of $\K_0(A_{\theta}\rtimes \Z_2)$ for $n$-dimensional noncommutative tori $A_\theta.$ It should be noted that $\K_1(A_{\theta}\rtimes \Z_2)$ is trivial (\cite{ELPW}).

Let  $\te$ be a real skew-symmetric $n \times n$ matrix  and $\theta'$ be  the  upper left $(n-1) \times (n-1)$ block of $\theta$. In this case, $A_{\te}$ can be written as a crossed product $ A_{\theta'}\rtimes \Z$, where the action $\gamma$  of $\Z$ on $A_{\theta'}$ given (by the generator of $\Z$ ) by
$ U_i \rightarrow e(\theta_{in})U_i,$ for $i =1, \cdots, n-1.$  Now $A_{\theta}\rtimes \Z_2 = A_{\theta'}\rtimes \Z \rtimes \Z_2 = A_{\theta'}\rtimes \Z_2 *  \Z_2$, since $\Z_2 *  \Z_2$ is isomorphic to $\Z \rtimes \Z_2$ as groups (see \cite[Proposition 6]{FW} for more details). Note that one copy of $\Z_2$ acts on $A_{\theta'}$ by flip action $\beta$ and the other by $\alpha = \gamma \circ \beta$. Our next step is to understand the $\K$-theory of $A_{\theta'}\rtimes (\mathbb{Z}_2 * \mathbb{Z}_2 )$. 

%Here $C(\T) \rtimes \mathbb{Z}_2$ is generated by the unitaries $u$ and $w$ satisfying $wuw=u^*$, and $w^2=1$ . Note that, the \ca\ $C^* (\mathbb{Z}_2 * \mathbb{Z}_2 )$ is generated by  $wu$ and $w$.

 For a general crossed product $A \rtimes \Z_2$, we first define a map $p$ which goes from $A \rtimes \Z_2$ to $M_2(A)$  such that $$ p(a + bW) = \left( \begin{array}{ccc}
a & b\\
WbW& WaW\\
  \end{array} \right),$$ where $W$ is the unitary in $A \rtimes \Z_2$ implementing the action of the generator of $\Z_2$. This induces a map $p_*: \K_0(A \rtimes \Z_2) \rightarrow \K_0(A),$ which is known to be the inverse of Green--Julg map.
We recall the six term exact sequence by Natsume \cite{Na} which was used by Farsi--Watling \cite{FW} to compute the $\K$-theory of $A_{\theta'}\rtimes (\mathbb{Z}_2 * \mathbb{Z}_2 )$.
For a free product $H_1*H_2$ acting on a \ca\ $A$, Natsume obtained the following exact sequence:

{\tiny \[
\begin{CD}
\K_0 (A) @>{i_{1*}-i_{2*}}>>\K_0 (A\rtimes H_1)\oplus \K_0 (A\rtimes H_2)@>{j_{1*}+j_{2*}}>> \K_0 (A\rtimes H_1*H_2)   \\
@A{}AA & &  @VV{e_1}V            \\
 \K_1 (A\rtimes H_1*H_2) @<<{j_{1*}+j_{2*}}<  \K_1 (A\rtimes H_1)\oplus \K_1 (A\rtimes H_2) @<<{i_{1*}-i_{2*}}< \K_1 (A),
\end{CD}
\]}

\noindent where $i_1, i_2, j_1, j_2$ are the natural inclusion maps. 
The right vertical map $e_1$, which we will describe in a while, is constructed in Natsume's paper. We call it exponential map since it is based on the exponential map in $\K$-theory. We want to compare the above sequence with the six-term exact sequence obtained from the classical Toeplitz exact sequence (with coefficient in $A$) which is same as the Pimsner--Voiculescu exact sequence for actions of $\Z$ on the C*-algebra $A$.

From the definition of the crossed product, any crossed product algebra, $A\rtimes_\alpha G$, for a unital C*-algebra $A$ and a discrete group $G$, has a natural representation (also called regular representation) $\iota$ on the Hilbert module $l^2( G, A)$ which is given by  $$\iota(a)(\xi) (g) = \alpha_{g^{-1}}(a)\xi (g), \quad \iota(h)(\xi) (g) = \xi (h^{-1}g),$$ for $a \in A$ and $g,h \in G$. 
Let $\Z$ act on a unital C*-algebra $A$ by an action $\gamma$. The classical Toeplitz algebra $\mathcal{T}^A$ with coefficients in $A$ is defined as follows: we restrict the natural representation $\iota$ of $A$ from $l^2( \Z, A)$ to $l^2( \Ne, A)$ (note that the restriction is well defined). Call this restricted representation $\iota_1$. When there is no confusion, we just call $\iota(a)$ and $\iota_1(a)$ by $a$.  Take the right shift operator $S$ on $l^2( \Ne, A)$ which is given by $S(\xi)(n) = \xi(n-1)$, $\xi(-1)=0$. Then $\mathcal{T}^A$ is generated by the elements $a \in l^2( \Ne, A)$ and $S  \in l^2( \Ne, A)$. We have the following exact sequence:

	\begin{equation}\label{eq:pimsner_short}
\begin{CD}
0 @>{}>>\mathcal{K}(l^2( \Ne, A))@>{\ph}>> \mathcal{T}^A @>{\ps}>>A\rtimes \Z @>{}>>0
\end{CD}
\end{equation}

\noindent by defining $\ps(a) = a$ and $\ps(S) = U$, where $U$ is the unitary in the crossed product  $A\rtimes \Z$ coming from the generator of $\Z$. It can be easily checked that $\ker(\ps) = A \otimes \mathcal{K}$. This is the so-called Pimsner--Voiculescu exact sequence which gives rise to the Pimsner--Voiculescu six term exact sequence. Now we define the map $e_2$ to be the exponential map in $\K$-theory for the above exact sequence. So we have 

{\small \[
\begin{CD}
\K_0 (\mathcal{K}(l^2( \Ne, A))) @>{}>>\K_0 (\mathcal{T}^A)@>{}>> \K_0 (A\rtimes \Z )  \\
@A{}AA & &  @VV{e_2}V            \\
 \K_1 (A\rtimes \Z ) @<<{}<  \K_1 (\mathcal{T}^A) @<<{}< \K_1 (\mathcal{K}(l^2( \Ne, A))).
\end{CD}
\]} 

\noindent Pimsner--Voiculescu also proved that $\mathcal{T}^A$ is $\KK$-equivalent to the algebra $A$. This gives

{\small \[
\begin{CD}
\K_0 (A) @>{\id -\gamma_{*}^{-1}}>>\K_0 (A)@>{i_*}>> \K_0 (A\rtimes \Z)   \\
@A{}AA & &  @VV{}V            \\
 \K_1 (A\rtimes \Z) @<<{i*}<  \K_1 (A) @<<{\id -\gamma_{*}^{-1}}< \K_1 (A),
\end{CD}
\]} 
where $i$ is the inclusion. 

The exact sequence \ref{eq:pimsner_short} also gives rise to the following exact sequence (tensoring with $M_2$)

\[
\begin{CD}
0 @>{}>>M_2( \mathcal{K}(l^2( \Ne, A)))@>{\ph}>> M_2(\mathcal{T}^A) @>{\ps}>>M_2(A\rtimes \Z) @>{}>>0.
\end{CD}
\]

We now describe the map $e_1$ using an exact sequence like the one above. 
Let the group $\mathbb{Z}_2 *  \mathbb{Z}_2$ be generated by $g$ and $s$, i.e. $g$ and $s$ generate the first and second copy of $\Z_2$ in $\mathbb{Z}_2 *  \mathbb{Z}_2$, respectively. Natsume obtained the exact sequence

\[
\begin{CD}
0 @>{}>>\mathcal{K}(l^2(P))@>{\eta}>> \mathcal{T}_p @>{\pi}>>C^* ( \mathbb{Z}_2 * \mathbb{Z}_2) @>{}>>0,
\end{CD}
\] with $P = P' \cup \{e\}$, where $P'$ is the the set of all non-empty words in $\mathbb{Z}_2* \mathbb{Z}_2$, which end in $g$ and $\mathcal{T}_p$ is generated by  $\mu(g)$ and $v(s)$, where $\mu(g)$ is the restriction of the left regular representation to $l^2(P)$ and  $v(s) =\ld(s)q(P)$, where $\ld(s)$ is the restriction of the left regular representation to $l^2(P') $  and $q(P)$ is the projection onto the subspace generated by the inclusion 
$l^2(P') \S l^2(P)$.  When all these are defined, there is a map $\pi$ sending $\mu(g)$ to $\ld(g)$ and $v(s)$ to $\ld(s)$. Denoting $\ker(\pi)$ to be $\mathcal{I}$, it can be shown that $\mathcal{I}$ is isomorphic to $\mathcal{K}(l^2(P))$. More details may be found in the paper by Natsume \cite{Na}.  Let $e_1$ be the map from $\K_0(C^* ( \mathbb{Z}_2 * \mathbb{Z}_2))$ to $\K_1(\mathcal{K}(l^2(P))$ coming from the  six-term exact sequence corresponding to the above exact sequence in $\K$-theory. %When $A = \C$, the KK-equivalence between $\mathcal{T}_p $ and $C^* ( \mathbb{Z}_2 ) \oplus C^* ( \mathbb{Z}_2 )$ shows that $e_1$ is exactly the same map $e_1$ from Natsume's six term exact sequence. 

The above construction can be easily extended to the case of crossed product. Let $\mathbb{Z}_2 *  \mathbb{Z}_2$ act on a unital $A$ with the action $\alpha$ and $\beta$ on $A$ from $\langle s \rangle$ and $\langle g \rangle$, respectively.  We call the action of $\mathbb{Z}_2 *  \mathbb{Z}_2$ by ${\alpha,\beta}$ and denote the crossed product by $A \rtimes_{\alpha,\beta} \Z_2*\Z_2$. 
$\mathcal{T}^A_{p}$ is constructed as follows. We have the natural representation $\iota'$ of $A \rtimes_{\alpha,\beta} \Z_2*\Z_2$ on $l^2(\Z_2*\Z_2, A)$ which we restrict to the Hilbert module  $l^2(P, A)$ in the following sense: if we denote the restriction by $\iota_2$, $a \in A, g, s$ act by the operators  
\begin{small}
$$\iota_2(a)(\xi) (x) = (\alpha,\beta)_{x^{-1}}(a)\xi (x), \quad \iota_2(g) (\xi) (x) = \xi (gx), \quad \iota_2(s) (\xi) (x) = \xi (sx)$$
\end{small}$(\text{by setting } \xi (s) = 0)$. Then $\mathcal{T}^A_{p}$ is the C*-algebra generated by $\iota_2(a)(a \in A), \iota_2(g)$ and $\iota_2(s)$. Now we have the following exact sequence (see Lemma A.3 \cite{Na}):

\[
\begin{CD}
0 @>{}>> \mathcal{K}(l^2(P,A))@>{\eta}>> \mathcal{T}^A_{p} @>{\pi}>>A \rtimes_{\alpha,\beta} \Z_2*\Z_2 @>{}>>0
\end{CD}
\] by defining $\pi(\iota_2(a)) = \iota'(a)$, $\pi(\iota_2(g)) = \iota'(g)$ and $\pi(\iota_2(s)) = \iota'(s)$.  Note that  for the case $A = \C, \iota_2(g) = \mu(g)$(as a generator of $\mathcal{T}_p$), $\iota_2(s) =v(s), \iota'(g) = \mu(g),  \iota'(s) = \mu(s)$. Denote the exponential map (of $\K$-theory) of the above exact sequence still by $e_1$.   The above exact sequence gives rise to Natsume's exact sequence in $\K$-theory. So we have
{\small \[
\begin{CD}
\K_0 (\mathcal{K}(l^2(P,A))) @>{}>>\K_0 (\mathcal{T}^A_{p})@>{}>> \K_0 (A \rtimes_{\alpha,\beta} \Z_2*\Z_2)  \\
@A{}AA & &  @VV{e_1}V            \\
 \K_1 (A \rtimes_{\alpha,\beta} \Z_2*\Z_2 ) @<<{}<  \K_1 (\mathcal{T}^A_{p})) @<<{}< \K_1 (\mathcal{K}(l^2(P,A))).
\end{CD}
\]} 
\noindent $\mathcal{T}^A_{p}$ can be shown to be $\KK$-equivalent to $(A \rtimes_ \alpha \Z_2) \oplus (A \rtimes_ \beta \Z_2)$ (see \cite{Na}, also \cite{PimsVoic82}).

Let $S=\mathbb{Z}\rtimes \mathbb{Z}_2$ with generators $a$ and $b$ i.e $a$ generates $\Z$, $b$ generates $\Z_2$, and $\Z_2$ acts on $\Z$ by flip. Now $S$ is isomorphic to $\mathbb{Z}_2 *  \mathbb{Z}_2$, where the later group is generated by $g$ and $s$ and the isomorphism identifies $a$ and $b$ with $sg$ and $g$, respectively. Now $l^2(P,A)$ could be identified with $l^2(P_1,A) \oplus l^2(P_2,A)$, where $$P_2 = \{g, gsg, gsgsg, gsgsgsg, \ldots\}, \quad P_1 =\{e, sg, sgsg, sgsgsg, \ldots\}.$$ Counting the number of $sg$'s, $P_1$ and $P_2$ have natural identifications with  $\Ne$. Under this identification $l^2(P,A)$ becomes $l^2(\Ne,A) \oplus l^2(\Ne,A)$, $\iota_2 (s)$ becomes $\left( \begin{array}{ccc}
0 & S\\
S^*& 0\\
  \end{array} \right)$, $\iota_2(g)$ becomes  $\left( \begin{array}{ccc}
0 & 1\\
1& 0\\
  \end{array} \right)$ and $\iota_2(a)$ becomes $\left( \begin{array}{ccc}
\gamma^{-1}(a) & 0\\
0& \gamma^{-1}\beta(a)\\
  \end{array} \right),$ where $\gamma$ is the action $\alpha \circ \beta$ on $A$. % and $\left( \begin{array}{ccc}
%\gamma^{-1}(a) & 0\\
%0& \gamma^{-1}\beta(a)\\
%  \end{array} \right)$ acts on $l^2(P_1,A) \oplus l^2(P_2,A).$ 
 Now under the above identification we have\begin{align*}
   \mathcal{K}(l^2(P,A)) &=  \mathcal{K}(l^2(P_1,A) \oplus l^2(P_2,A))
\\
 &=  \mathcal{K}(l^2(\Ne,A) \oplus l^2(\Ne,A))
\\  &= M_2(\mathcal{K}((l^2(\Ne,A))).
\end{align*} 
Also note that, from the generators and relations we have 
$$A \rtimes_{\alpha,\beta} \Z_2*\Z_2 = (A \rtimes_\gamma \Z) \rtimes \mathbb{Z}_2,$$ where, in the right hand side, the action of $\mathbb{Z}_2 = \langle g \rangle$ on $A$ is the given action $\beta$ and on $\Z$ is flip.

Summarising we have the inclusions $P^A_{\mathcal{K}}$, $P^A_{\mathcal{T}}$, $P^A_{\T}$ as follows:
\

$P^A_{\mathcal{K}} : \mathcal{K}(l^2(P,A)) \rightarrow M_2(\mathcal{K}((l^2(\Ne,A)))$, 

\

$P^A_{\mathcal{T}} : \mathcal{T}^A_{p} \rightarrow M_2(\mathcal{T}^A)$,  \hspace{.5cm} ($\mathcal{T}^A$ denotes the Toeplitz algebra of the $\Z$ action $\gamma$ on $A$)

\

$P^A_{\T} :   A \rtimes_{\alpha,\beta} \Z_2*\Z_2 = (A \rtimes_\gamma \Z) \rtimes \mathbb{Z}_2 \rightarrow M_2(A \rtimes_\gamma \Z), $\ 
which are given by

\begin{tiny}
$P^A_{\mathcal{T}} (\iota_2 (s)) = \left( \begin{array}{ccc}
0 & S\\
S^*& 0\\
  \end{array} \right), \quad P^A_{\mathcal{T}} (\iota_2(g)) = \left( \begin{array}{ccc}
0 & 1\\
1& 0\\
  \end{array} \right), \quad P^A_{\mathcal{T}} (\iota_2(a)) = \left( \begin{array}{ccc}
a & 0\\
0& \beta(a)\\
  \end{array} \right),$
  \end{tiny} $P^A_{\mathcal{K}}$ is the identification map, $P^A_{\T}$ is the natural map $p$ which was  discussed before i.e

\begin{tiny}
$P^A_{\mathbb{T}} (U) = \left( \begin{array}{ccc}
U & 0\\
0 & U^*\\
  \end{array} \right), \quad P^A_{\mathbb{T}} (W) = \left( \begin{array}{ccc}
0 & 1\\
1& 0\\
  \end{array} \right), \quad P^A_{\mathbb{T}} (\iota_2(a)) = \left( \begin{array}{ccc}
a & 0\\
0& \beta(a)\\
  \end{array} \right),$
  \end{tiny}
  where $U$ and $W$ are the unitaries corresponding to the generators of $\Z$ and $\Z_2$ inside $(A \rtimes_\gamma \Z) \rtimes \mathbb{Z}_2,$ respectively.
  
  \noindent So by construction we have the following commutative diagram:
\begin{small}
  	\[
\begin{CD}
0 @>{}>> M_2(\mathcal{K}(l^2(\Ne),A))@>{\ph}>> M_2(\mathcal{T}^A) @>{\ps}>>M_2(A\rtimes_\gamma \Z) @>{}>>0
\\
&& @AA{P^{A}_{\mathcal{K}}}A @AA{P^{A}_{\mathcal{T}}}A  @AA{P^{A}_{\mathbb{T}}}A            \\
0 @>{}>>\mathcal{K}(l^2(P),A)@>{\eta}>> \mathcal{T}^A_p @>{\pi}>> A \rtimes_{\alpha,\beta} \Z_2*\Z_2 @>{}>>0
\end{CD}
\]
 \end{small}

\begin{thm}\label{main1}

For unital $A$, the connecting maps of the above sequences (Pimsner--Voiculescu and Natsume) commute in the following sense:
\[
\xymatrixrowsep{5pc}
\xymatrixcolsep{5pc}
\xymatrix
{
\K_0(A \rtimes_{\alpha,\beta} \Z_2*\Z_2) \ar[r]^{e_1} \ar[rd]^{p_*} &
\K_1 (A)  \\
&\K_0(A\rtimes_\gamma \Z)),\ar[u]^{e_2}}
\]
where $p_*$ is the map induced by the natural map $p:A \rtimes_{\alpha,\beta} \Z_2*\Z_2 \cong   (A\rtimes_\gamma \Z)\rtimes\Z_2 \rightarrow M_2(A\rtimes_\gamma \Z).$ 
\end{thm}

\begin{proof}
	
	The result follows from the  commutative diagram 
	\begin{small}
	    	\[
\begin{CD}
0 @>{}>> M_2(\mathcal{K}(l^2(\Ne),A))@>{\ph}>> M_2(\mathcal{T}^A) @>{\ps}>>M_2(A\rtimes_\gamma \Z) @>{}>>0
\\
&& @AA{P^{A}_{\mathcal{K}}}A @AA{P^{A}_{\mathcal{T}}}A  @AA{P^{A}_{\mathbb{T}}}A            \\
0 @>{}>>\mathcal{K}(l^2(P),A)@>{\eta}>> \mathcal{T}^A_p @>{\pi}>> A \rtimes_{\alpha,\beta} \Z_2*\Z_2 @>{}>>0
\end{CD}
\]\end{small}
  and the naturality of connecting maps. 
\end{proof}

\subsection{Main computations with $\K$-theory}\label{Sec:nt}

Let us consider the $3 \times 3$  skew-symmetric matrix: 

$$\theta
  := \left( \begin{array}{cccc}
0 & \theta_{12} & \theta_{13}\\
  -\theta_{12}  & 0 & \theta_{23}\\
    -\theta_{23}  & -\theta_{23} & 0\\
  \end{array}\right), $$
  where $\theta_{12}, \theta_{13},\theta_{23}$ are irrational numbers and rationally independent. We call such a $\theta$ totally irrational. Let $A_{\te}$ be the corresponding 3-dimensional noncommutative torus generated by $u_1$, $u_2$ and $u_3$.  Let $\mathbb{Z}_2 = \langle g\rangle $ act on $A_{\te}$  by flip. We denote by $A_{\theta_{12}},$ the two dimensional noncommutative tori, which is generated by the matrix $$ \left( \begin{array}{ccc}
0 & \theta_{12} \\
  -\theta_{12}  & 0 \\
   \end{array}\right).$$ $\mathbb{Z}_2 = \langle g\rangle $ also acts on $A_{\theta_{12}}$ by flip. Let us also define an action of an another copy of $\mathbb{Z}_2 = \langle s\rangle $ on $A_{\theta_{12}}$  given by $u_1 \rightarrow e(-\theta_{13})u_1^{-1}$ and $u_2 \rightarrow e(-\theta_{23})u_2^{-1}$. So we have corresponding crossed products which we denote by $A_{\theta_{12}}\rtimes _g \Z_2$ and $A_{\theta_{12}}\rtimes _s \Z_2$, respectively. So we have, in fact, $\mathbb{Z}_2*\Z_2 = \langle g,s\rangle $ acting on $A_{\theta_{12}}$. The corresponding crossed product we denote by $A_{\theta_{12}}\rtimes_\phi (\mathbb{Z}_2 * \mathbb{Z}_2 ).$  As we have seen in the beginning of this section that $A_\theta$ can be written as crossed product $A_{\theta_{12}} \rtimes_\gamma \Z$ by the action $u_1 \rightarrow e(\theta_{13})u_1$ and $u_2 \rightarrow e(\theta_{23})u_2,$ 
    we have  the isomorphism $ A_\te\rtimes  \mathbb{Z}_2 \cong (A_{\theta_{12}} \rtimes_\gamma \Z) \rtimes \Z_2 \cong  A_{\theta_{12}}\rtimes_\phi (\mathbb{Z}_2 * \mathbb{Z}_2 ) .$ Note that, in the isomorphism $ A_{\theta_{12}}\rtimes_\phi (\mathbb{Z}_2 * \mathbb{Z}_2 ) \cong A_\te\rtimes  \mathbb{Z}_2 ,$ $\lambda(sg)$ is identified with $u_3$ and $\lambda(g)$ is identified with $w$, where $w$ is the unitary in $A_\te\rtimes  \mathbb{Z}_2$ coming from the unitary in $\mathbb{Z}_2$.

In this case, $A_{\theta_{12}}\rtimes_\phi (\mathbb{Z}_2 * \mathbb{Z}_2 )$, Natsume's exact sequence looks like 

{\tiny \[
\begin{CD}
\K_0 (A_{\theta_{12}}) @>{i_{1*}-i_{2*}}>>\K_0 (A_{\theta_{12}}\rtimes _s \Z_2)\oplus \K_0 (A_{\theta_{12}}\rtimes_g \Z_2)@>{j_{1*}+j_{2*}}>> \K_0 (A_{\theta_{12}}\rtimes_\phi \Z_2*\Z_2)   \\
@A{}AA & &  @VV{}V            \\
 \K_1 (A_{\theta_{12}}\rtimes_\phi \Z_2*\Z_2) @<<{j_{1*}+j_{2*}}<  \K_1 (A_{\theta_{12}}\rtimes_s \Z_2)\oplus \K_1 (A_{\theta_{12}}\rtimes_g \Z_2) @<<{i_{1*}-i_{2*}}< \K_1 (A_{\theta_{12}}),
\end{CD}
\]} where $i_1, i_2, j_1,j_2$ are inclusions.

\subsection{A basis of $\K_0(A_\theta)$}\label{subsec:basis_of_Atheta}

Let us first write down a basis $\K_0(A_\theta)$. We have  
 $$\theta = \left( \begin{array}{ccc}
0 & \theta_{12} & \theta_{13}\\ 
 -\theta_{12} & 0  & \theta_{23}\\
-\theta_{13} & -\theta_{23} & 0\\
\end{array} \right) .
$$ As before, we also assume that $\theta$ is totally irrational. Following Elliott (\cite[Theorem 3.1]{elliott80}), 
since $\theta$ is totally irrational, the trace (which we will denote by $\Tr$ always) of $A_\theta$ is faithful and the range of the trace is the subgroup of $\R$ generated by the numbers $1, \theta_{12}, \theta_{13}, \theta_{23}$. Also a similar result holds for $A_{\theta_{12}}$ i.e the trace of $A_{\theta_{12}}$ is faithful and the range is the subgroup of $\R$ generated by the numbers $1, \theta_{12}$.    We consider the projective $A_\theta$-module $S^1:=\overline{ \mathscr{S}(\R \times \Z)_{12}}$ constructed as in Theorem \ref{thm:rieffel_proj_mod} for $M = \R \times \Z$. The trace of this module is $\theta_{12}$ (see Remark \ref{rmk:trace_of_heisenberg}).

Now consider the following matrices  $$
 \theta_1 = \left( \begin{array}{ccc}
0 & \theta_{23} & -\theta_{12}\\
 -\theta_{23} & 0  & -\theta_{13}\\
\theta_{12} & \theta_{13} & 0\\
\end{array} \right), 
 $$ $$
  \theta_2 = \left( \begin{array}{ccc}
0 & \theta_{13} & \theta_{12}\\
 -\theta_{13} & 0  & -\theta_{23}\\
-\theta_{12} & \theta_{23} & 0\\
\end{array} \right) .
$$

Note that $A_{\theta}$, $A_{\theta_1}$ and $A_{\theta_2}$ re\-present the same noncommutative torus. Let $S^2:=\overline{ \mathscr{S}(\R \times \Z)_{13}}$ and $S^3:=\overline{ \mathscr{S}(\R \times \Z)_{23}}$ be the projective modules over $A_{\theta_2} $ and $A_{\theta_1}$, respectively, as discussed above. Now, similarly to the previous case, we see that $\Tr(S^2)= \theta_{13}$ and $\Tr(S^3) = \theta_{23}$. Since $\Tr$ is faithful for $\theta$ (as it is totally irrational), we conclude that  $[S^1],[S^2],[S^3]$ along with the trivial element generate $\K_0(A_\theta)$.  

Let us now recall the Pimsner--Voiculescu exact sequence for $A_\theta = A_{\theta_{12}}\rtimes_\gamma\Z$

{\small \[
\begin{CD}
\K_0 (A_{\theta_{12}}) @>{0}>>\K_0 (A_{\theta_{12}})@>{i_*}>> \K_0 (A_{\theta_{12}}\rtimes \Z)   \\
@A{}AA & &  @VV{e_2}V            \\
 \K_1 (A_{\theta_{12}}\rtimes \Z) @<<{i*}<  \K_1 (A_{\theta_{12}}) @<<{0}< \K_1 (A_{\theta_{12}}).
\end{CD}
\]} 

\noindent In the exact sequence the upper left and the lower right map is 0 because the action $\gamma$ of $\Z$ on $A_{\theta_{12}}$ is homotopic to the trivial action (\cite[Lemma 1.5]{Ph})). 
This gives rise to the short exact sequence:

\[
\begin{CD}
0 @>{}>>\K_0 (A_{\theta_{12}})@>{i_*}>> \K_0 (A_{\theta_{12}}\rtimes_\gamma \Z) @>{e_2}>>\K_1 (A_{\theta_{12}}) @>{}>>0.
\end{CD}
\] 

Let us consider the projective $A_{\theta_{12}}$-module $S:=\overline{ \mathscr{S}(\R)_{12}}$ constructed as in Theorem \ref{thm:rieffel_proj_mod} for $M = \R$. The trace of this module is $\theta_{12}$ (see Remark \ref{rmk:trace_of_heisenberg}). Now, since the image of the faithful trace of $A_{\theta_{12}}$ is generated by the numbers 1 and $\theta_{12},$  a basis of $\K_0 (A_{\theta_{12}})$ is given by $\{[1],[S]\}$. Also in the above short exact sequence we have $i_*([S]) = [S^1],$ since they have the same trace in $A_\theta = A_{\theta_{12}}\rtimes \Z$ and $i_*([1]) = [1].$  Since  $[1], [S^1], [S^2], [S^3]$ generate $\K_0$ of $A_\te$, $[S^2], [S^3]$ map to generators of $\K_1 (A_{\theta_{12}})$ by the map $e_2$ in the above short exact sequence.

\subsection{A basis of $\K_0(A_{\theta_{12}}\rtimes_g\Z_2)$}
Note that $A_{\theta_{12}}\rtimes_g\Z_2$ is generated by the unitaries $u_1$, $u_2$ and $w$ such that $w^2 = 1, u_2u_1 = e(\theta_{12})u_1u_2, wuw = u^*, wvw = v^*.$ A basis of $\K_0(A_{\theta_{12}}\rtimes_g\Z_2)$ was given by Walters in \cite[Lemma 2.3]{walters:k-theoryflip} (also see \cite{ELPW}). Let $S^{\theta_{12}}_g$ be the  special $\K$-theory element which we call ``flip invariant Rieffel projection" inside  $A_{\theta_{12}}\rtimes _g\Z_2$ as constructed by Walters (\cite{walters:k-theoryflip})(the element was denoted by $E_{00}(\theta)$ and $F_{00}(\theta)$ in \cite[Page 593]{walters:k-theoryflip}) in the same way as the classical Rieffel projection  $S^{\theta_{12}} \in A_{\theta_{12}}$. Then a basis of $\K_0(A_{\theta_{12}}\rtimes_g\Z_2)$ is given by the $\K$-theory classes of the following elements:\\
$1$, \\
 $P_1= \frac{1}{2} (1+w)$, \\
 $P_2= \frac{1}{2} (1-u_1w)$, \\
 $P_3= \frac{1}{2} (1-u_2w)$, \\
 $P_4= \frac{1}{2} (1-e( \frac{1}{2} {\theta_{12}})u_1u_2w)$ and \\ 
 $S^{\theta_{12}}_g.$ Also it is well known that $\K_1(A_{\theta_{12}}\rtimes_g\Z_2)$ is trivial (see for example \cite{walters:k-theoryflip}, \cite{ELPW}).
 \subsection{A basis of $\K_0(A_{\theta_{12}}\rtimes_s\Z_2)$}$A_{\theta_{12}}\rtimes_s\Z_2$ is generated by the unitaries $u_1$, $u_2$ and $w'=u_3w$ and  we have the relations $w'^2 = 1, u_2u_1 = e(\theta_{12})u_1u_2, w'u_1w' = e(-\theta_{13})u_1^{-1}, w'u_2w' = e(-\theta_{23})u_2^{-1}.$ Upon setting $\tilde{u_1} = e(\frac{1}{2}\theta_{13})u_1$ and $\tilde{u_2} = e(\frac{1}{2}\theta_{23})u_2$, we have that $$\tilde{u}_2\tilde{u}_1 = e(\theta_{12})\tilde{u}_1\tilde{u}_2,\quad  w'\tilde{u}w' = \tilde{u}^*,\quad w'\tilde{v}w' = \tilde{v}^*.$$ So $A_{\theta_{12}}\rtimes_s\Z_2$ is isomorphic to $A_{\theta_{12}}\rtimes_g\Z_2$ and a basis of $A_{\theta_{12}}\rtimes_s\Z_2$ is given by the $\K$-theory classes of the following elements:\\
 $1$, \\
 $P_5= \frac{1}{2} (1+u_3w)$, \\
 $P_6= \frac{1}{2} (1-u_1u_3w)$, \\
 $P_7= \frac{1}{2} (1-u_2u_3w)$, \\
 $P_8= \frac{1}{2} (1-e( \frac{1}{2} ({\theta_{12}+\theta_{13}+\theta_{23}}))u_1u_2u_3w)$ and \\ 
 $S^{\theta_{12}}_s,$ where $S^{\theta_{12}}_s$ is the ``flip invariant Rieffel projection" inside  \mbox{$A_{\theta_{12}}\rtimes_s\Z_2.$}

 As shown in the beginning of the section, the projective modules $S^2$,  $S^3$ can be extended to projective modules over $A_{\theta} \rtimes \Z_2\cong A_{\theta_{12}}\rtimes_\phi \Z_2*\Z_2$ and by abuse of notations, we still denote the extended modules by $S^2$,  $S^3$, as well.

\begin{cor}\label{3toriflipped}
Let $\theta$ be a totally irrational $3 \times 3$ skew-symmetric matrix. Then $\K_0(A_{\te}\rtimes \mathbb{Z}_2)$ is isomorphic to $\mathbb{Z}^{12}$, which is generated by the $\K$-theory classes of the elements: \\
1,\\
$P_1= \frac{1}{2} (1+w)$,\\
$P_2= \frac{1}{2} (1-u_1w)$,\\
$P_3= \frac{1}{2} (1-u_2w)$,\\
$P_4= \frac{1}{2} (1-e( \frac{1}{2} {\theta_{12}})u_1u_2w)$,\\
$P_5= \frac{1}{2} (1+u_3w)$,\\
$P_6= \frac{1}{2} (1-e( \frac{1}{2} {\theta_{13}})u_1u_3w)$,\\
$P_7= \frac{1}{2} (1-e( \frac{1}{2} {\theta_{23}})u_2u_3w)$,\\
%$P_8= \frac{1}{2} (1-u_1u_2u_3w)$,
$j_{2*}(S^{\theta_{12}}_g)$, $j_{1*}(S^{\theta_{12}}_s)$, $S^2$, $S^3$. 
\end{cor}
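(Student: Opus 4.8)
The plan is to compute $K_0(A_\theta \rtimes \mathbb{Z}_2)$ via Natsume's six-term exact sequence for $A_{\theta_{12}} \rtimes (\mathbb{Z}_2 * \mathbb{Z}_2)$, exploiting Theorem \ref{main1} to identify the relevant connecting map with the exponential map $e_2$ of the Toeplitz (Pimsner--Voiculescu) sequence, which for the \emph{trivial} $\mathbb{Z}$-action on $A_{\theta_{12}}$ is zero. First I would assemble the known K-theory inputs: $K_0(A_{\theta_{12}}) \cong \mathbb{Z}^2$, $K_1(A_{\theta_{12}}) \cong \mathbb{Z}^2$, and for the flip crossed product $A_{\theta_{12}} \rtimes_{\mathrm{flip}} \mathbb{Z}_2$ one has (by Walters/ELPW) $K_0 \cong \mathbb{Z}^6$ and $K_1 = 0$; the six generators of $K_0(A_{\theta_{12}} \rtimes \mathbb{Z}_2)$ are the class of $1$, the four ``half'' projections $\tfrac12(1 \pm u_i w)$-type elements coming from making the standard two-dimensional generators flip-equivariant, plus the flip-invariant Rieffel projection $S^{\theta_{12}}$. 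Feeding these into Natsume's sequence, the map $i_{1*} - i_{2*} \colon K_1(A_{\theta_{12}}) \to K_1(A_{\theta_{12}}\rtimes_s\mathbb{Z}_2) \oplus K_1(A_{\theta_{12}}\rtimes_g\mathbb{Z}_2) = 0$ is automatically zero, and $i_{1*} - i_{2*}$ on $K_0$ is zero because both copies of $\mathbb{Z}_2$ act by the \emph{same} flip action, so the two inclusions induce the same map. Hence the sequence breaks into a short exact sequence
\[
0 \to K_0(A_{\theta_{12}}\rtimes\mathbb{Z}_2)^{\oplus 2} \xrightarrow{j_{1*}+j_{2*}} K_0(A_{\theta_{12}}\rtimes \mathbb{Z}_2*\mathbb{Z}_2) \xrightarrow{e_1} K_1(A_{\theta_{12}}) \to \cdots,
\]
and the question reduces to computing the image of $e_1$ (equivalently $\ker e_1$) together with the cokernel of $j_{1*}+j_{2*}$; note $j_{1*}+j_{2*}$ need not be injective a priori, so one must track the image of the $K_1(A_{\theta_{12}})$-tail coming around the corner.

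The second step is the heart: identify $e_1$. By Theorem \ref{main1}, $e_1 = e_2 \circ p$ where $p$ is induced by the embedding $A_{\theta_{12}} \rtimes \mathbb{Z}_2*\mathbb{Z}_2 = (A_{\theta_{12}} \otimes C(\mathbb{T})) \rtimes \mathbb{Z}_2 \hookrightarrow M_2(A_{\theta_{12}} \otimes C(\mathbb{T}))$, and $e_2$ is the exponential map for the Toeplitz extension of $A_{\theta_{12}} \otimes C(\mathbb{T})$ with respect to the \emph{trivial} $\mathbb{Z}$-action. For a trivial action, the Pimsner--Voiculescu/Toeplitz sequence is the untwisted one and the exponential (index) map $e_2 \colon K_0(A_{\theta_{12}} \otimes C(\mathbb{T})) \to K_1(A_{\theta_{12}} \otimes C(\mathbb{T})) \cong K_1(A_{\theta_{12}}) \oplus K_0(A_{\theta_{12}})$ is the cup product with the Bott class of $C(\mathbb{T})$, i.e.\ the composite $K_0(A_{\theta_{12}}\otimes C(\mathbb{T})) \to K_0(A_{\theta_{12}}\otimes C(\mathbb{T})) \xrightarrow{\partial}$ that in PV language is ``$\mathrm{id} - \alpha_* = 0$ on the $K_0$ summand and the identity-type iso on the suspension summand''. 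Concretely $e_2$ kills $K_0(A_{\theta_{12}}) \otimes 1 \subset K_0(A_{\theta_{12}}\otimes C(\mathbb{T}))$ and sends the $K_{-1}$-type part ($K_1(A_{\theta_{12}}) \otimes [\text{bott}]$) isomorphically onto $K_1(A_{\theta_{12}})$. So to compute $e_1$ on each of the twelve proposed generators one precomposes with $p$: the identity and the seven half-projections $P_1,\dots,P_7$ land, after $p$, in classes supported in the $K_0(A_{\theta_{12}}) \otimes 1$ part (they are built from flip-equivariant versions of unitaries not involving $u_3 \leftrightarrow$ the $C(\mathbb{T})$-generator), hence $e_1 = 0$ on them; the modules $j_{1*}(S^{\theta_{12}}_g)$, $j_{2*}(S^{\theta_{12}}_s)$ likewise go to the $\mathbb{Z}_2$-part, so $e_1 = 0$; whereas $S^2, S^3$, being completions of $\mathscr{S}(\mathbb{R}\times\mathbb{Z})$ whose $\mathbb{Z}$-factor is exactly the $u_3$ (= $C(\mathbb{T})$) direction, map under $p$ to classes with nontrivial ``winding'' component, and $e_1$ sends $[S^2], [S^3]$ onto the two generators of $K_1(A_{\theta_{12}}) \cong \mathbb{Z}^2$. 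This shows $e_1$ is surjective with the displayed classes of $S^2, S^3$ as preimages of a basis, while the remaining ten classes lie in $\ker e_1 = \mathrm{im}(j_{1*}+j_{2*})$.

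The third step is bookkeeping: since $e_1$ is onto $\mathbb{Z}^2$, the short exact sequence gives $K_0(A_\theta \rtimes \mathbb{Z}_2) \cong \mathrm{im}(j_{1*}+j_{2*}) \oplus \mathbb{Z}^2$ (it splits because $\mathbb{Z}^2$ is free), and one must check $\mathrm{im}(j_{1*}+j_{2*}) \cong \mathbb{Z}^{10}$ with the ten listed elements a basis. The source is $K_0(A_{\theta_{12}}\rtimes\mathbb{Z}_2)^{\oplus 2} \cong \mathbb{Z}^6 \oplus \mathbb{Z}^6 = \mathbb{Z}^{12}$, so one needs a rank-$2$ kernel; this kernel comes precisely from the image of the connecting map out of $K_1(A_{\theta_{12}})$ (the corner map $K_1(A_{\theta_{12}}\rtimes\Z_2*\Z_2) \leftarrow \cdots$ composed around), and using $K_1(A_{\theta_{12}}) \cong \mathbb{Z}^2$ together with exactness one pins the kernel down — it identifies the two copies of the Rieffel-projection class $S^{\theta_{12}}$ (viewed in each flip crossed product) up to the relations forced by $i_{1*}-i_{2*}$, which is why only $j_{1*}(S^{\theta_{12}}_g)$ and $j_{2*}(S^{\theta_{12}}_s)$ (not both independently and not a third combination) appear, and likewise collapses one of $[S^1],[S^2],[S^3]$ against $[S^{\theta_{12}}]$ as stated before the corollary. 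Counting: $1$ (from the unit), $P_1,\dots,P_7$ (seven half-projections, the higher-dimensional analogue of ELPW's six two-dimensional ones plus the one extra coming from $u_3$), and $j_{1*}(S^{\theta_{12}}_g), j_{2*}(S^{\theta_{12}}_s)$ give $10$, plus $S^2, S^3$ give $12$, matching $\mathbb{Z}^{12}$. Finally one verifies these twelve classes are genuinely independent, e.g.\ by pairing against traces / the canonical trace and the dual-trace-type functionals on $A_\theta \rtimes \mathbb{Z}_2$, or by matching with the (corrected) Farsi--Watling count.

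\textbf{Main obstacle.} The delicate point is the third step — controlling $\ker(j_{1*}+j_{2*})$ and hence the exact \emph{set} of generators, not merely the abstract isomorphism $K_0 \cong \mathbb{Z}^{12}$. Abstractly the rank follows immediately ($6+6-2+2 = 12$), but showing that the specific elements $P_1,\dots,P_7, j_{1*}(S^{\theta_{12}}_g), j_{2*}(S^{\theta_{12}}_s), S^2, S^3$ (rather than some other combination) form a \emph{basis} requires a careful analysis of how the half-projections $\tfrac12(1\pm u_i w)$ sit inside the two summands $K_0(A_{\theta_{12}}\rtimes_s\mathbb{Z}_2) \oplus K_0(A_{\theta_{12}}\rtimes_g\mathbb{Z}_2)$ under $(i_{1*}-i_{2*})$ and $(j_{1*}+j_{2*})$ — in particular tracking which projections are ``genuinely new'' versus which are identified with the Rieffel class via the $K_1$-tail. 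This is exactly the step where the referenced computations in \cite{FW} are unclear, so it will need to be done carefully and explicitly, most likely by evaluating the relevant trace and index functionals on each candidate generator.
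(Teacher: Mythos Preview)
Your overall strategy --- Natsume's sequence plus Theorem~\ref{main1} to handle $e_1$ via $S^2,S^3$ --- is exactly the paper's. But there is a genuine error: you claim $i_{1*}-i_{2*}=0$ on $K_0(A_{\theta_{12}})$ ``because both copies of $\Z_2$ act by the same flip action''. This is false. The map is $x\mapsto (i_{1*}(x),-i_{2*}(x))$ into the \emph{direct sum} $K_0(A_{\theta_{12}}\rtimes_s\Z_2)\oplus K_0(A_{\theta_{12}}\rtimes_g\Z_2)$; the two components land in different summands, so even though the two crossed products are abstractly isomorphic the difference is injective, not zero. The paper uses this injectivity to conclude $K_1(A_{\theta_{12}}\rtimes\Z_2*\Z_2)=0$. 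Your later hedging --- ``$j_{1*}+j_{2*}$ need not be injective a priori'', the rank-$2$ kernel ``comes from the connecting map out of $K_1(A_{\theta_{12}})$'' --- shows you sense the problem, but the source is wrong: $\ker(j_{1*}+j_{2*})=\mathrm{im}(i_{1*}-i_{2*})$ comes from $K_0(A_{\theta_{12}})$, not from any connecting map. That $K_0(A_{\theta_{12}})$ and $K_1(A_{\theta_{12}})$ both happen to be $\Z^2$ makes your rank count accidentally correct.

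This matters precisely at the ``delicate point'' you flag. The missing ingredient, which the paper supplies, is an explicit computation of $i_{1*}[S^{\theta_{12}}]$ and $i_{2*}[S^{\theta_{12}}]$ in a basis of each $K_0(A_{\theta_{12}}\rtimes\Z_2)$, done via Walters' vector-trace invariants \cite[Corollary~5.6]{walters:k-theoryflip}:
\[
i_{1*}[S^{\theta_{12}}]=2[S^{\theta_{12}}_g]+[P_1]+[P_2]-[P_3]-[P_4],\qquad i_{2*}[S^{\theta_{12}}]=2[S^{\theta_{12}}_s]+[P_5]+[P_6]-[P_7]-[P_8],
\]
with $P_8=\tfrac12(1-e(\tfrac12\theta_{12})u_1u_2u_3w)$ the half-projection absent from the final list. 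Together with the easy relation for $i_{k*}[1]$, this pins down $\mathrm{im}(i_{1*}-i_{2*})$ inside $\Z^6\oplus\Z^6$ and hence its cokernel; that is why exactly $1,P_1,\dots,P_7,j_{1*}(S^{\theta_{12}}_g),j_{2*}(S^{\theta_{12}}_s)$ survive as a basis of $\mathrm{im}(j_{1*}+j_{2*})=\ker e_1$, and why $P_8$ drops out. Your instinct to use trace pairings is right, but it is needed at this step --- to compute $i_{1*}-i_{2*}$ --- rather than as an independence check at the end.
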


\begin{proof}

We have Natsume's exact sequence for the C*-algebra $A_{\te}\rtimes \mathbb{Z}_2 \cong A_{\theta_{12}}\rtimes_\phi \Z_2*\Z_2$ :
{\tiny \[
\begin{CD}
\K_0 (A_{\theta_{12}}) @>{i_{1*}-i_{2*}}>>\K_0 (A_{\theta_{12}}\rtimes _s \Z_2)\oplus \K_0 (A_{\theta_{12}}\rtimes_g \Z_2)@>{j_{1*}+j_{2*}}>> \K_0 (A_{\theta_{12}}\rtimes_\phi \Z_2*\Z_2)   \\
@A{}AA & &  @VV{}V            \\
 \K_1 (A_{\theta_{12}}\rtimes_\phi \Z_2*\Z_2) @<<{j_{1*}+j_{2*}}<  \K_1 (A_{\theta_{12}}\rtimes_s \Z_2)\oplus \K_1 (A_{\theta_{12}}\rtimes_g \Z_2) @<<{i_{1*}-i_{2*}}< \K_1 (A_{\theta_{12}}).
\end{CD}
\]}

It is well known that $\K_0 (A_{\theta_{12}})$ is generated by $1$ and the class of the Bott element $[S^{\theta_{12}}].$ Now for $0 <  \theta_{12} < \frac{1}{2}$, $$i_{2*}[S^{\theta_{12}}] = 2[S^{\theta_{12}}_g] + ([P_2] + [P_4]) - ([P_1] + [P_3]).$$ Indeed, these two elements have the same vector trace \cite[Corollary 5.6, and Page 597]{walters:k-theoryflip} (the first named author is indebted to Siegfried Echterhoff for showing the argument.) Similarly, $$i_{1*}[S^{\theta_{12}}] = 2[S^{\theta_{12}}_s] + ([P_6] + [P_8]) - ([P_5] + [P_7]).$$ Also for $\frac{1}{2} <  \theta_{12} < 1$, the expression of $i_{1*}[S^{\theta_{12}}]$ is essentially the same (with some sign modification) and hence gives a similar result.  Hence upper left $i_{1*}-i_{2*}$ is injective. Also $\K_1 (A_{\theta_{12}}\rtimes_s \Z_2)\oplus \K_1 (A_{\theta_{12}}\rtimes_g \Z_2)$ is zero. So $\K_1(A_{\theta_{12}}\rtimes_\phi \Z_2*\Z_2)$ is also zero. Now we are left with

{\tiny \[
\begin{CD}
\K_0 (A_{\theta_{12}}) @>{i_{1*}-i_{2*}}>>\K_0 (A_{\theta_{12}}\rtimes _s \Z_2)\oplus \K_0 (A_{\theta_{12}}\rtimes_g \Z_2)@>{j_{1*}+j_{2*}}>> \K_0 (A_{\theta_{12}}\rtimes_\phi \Z_2*\Z_2)   \\
@A{}AA & &  @VV{}V            \\
 0 @<<{0}<  0. @<<{0}< \K_1 (A_{\theta_{12}}).
\end{CD}
\]}

Now the elements $S^2$ and $S^3$ are lifted from $\K_0(A_{\theta_{12}} \rtimes \Z)$ to $\K_0(A_{\theta_{12}} \rtimes_\phi \Z_2*\Z_2)$ via the map $p_*$ (since we can think $p_*$ as inverse of Green--Julg map). Since $S^2$ and $S^3$ map to generators of $\K_1 (A_{\theta_{12}})$ (in the Pimsner--Voiculescu sequence via the map $e_2$), by Theorem \ref{main1} the claim follows. 
\end{proof}
Since the $\K$-theory of $A_\theta \rtimes \Z_2$ does not dependent on the parameter $\theta$ (see \cite[Corollary 1.13]{ELPW}), we obtain 
		
		\begin{cor}
		If $\theta$ is a $3 \times	3$ real skew-symmetric matrix, then $\K_0(A_\theta \rtimes \Z_2) = \mathbb{Z}^{12}$ and $\K_1(A_\theta \rtimes \Z_2) = 0$.
		\end{cor}
		
		\begin{rmk}
		Also for a general $\theta$, a continuous field argument (\cite[Remark 2.3]{ELPW}, \cite{chak01}) could be used to find the $\K$-theory generators, which we shall omit.

		\end{rmk}

\begin{rmk}\label{FWremark}
	The authors have the unpleasant obligation to warn the reader, that the corresponding result for $\K$-theory of flipped crossed product in \cite{FW} has a gap: it was claimed that (in the notations of the present paper) $i_{1*} - i_{2*} $ sends generators to  generators. Fortunately, the arguments in the last corollary fill the gap for the three dimensional case which do not affect the final result of  \cite{FW} at least for the three dimensional case.
	\end{rmk}

  \begin{qst}
	Our results suggest the following questions:
	\begin{itemize}
		\item Can one extend the Heisenberg modules of the form $\mathscr{S}(\R^p \times \Z^q)$ over $A_\theta$ to ones over general crossed products  $A_{\theta}^\infty \rtimes F$, where the finite cyclic group $F$ acts on $A_{\theta}^\infty$ as in Section 4?
		\item 
	What are the generators of  $\K_0(A_{\theta} \rtimes F)$, where $F$ acts on $A_{\theta}$ as in Section 4?
	  \item Heisenberg modules are also linked with results in signal analysis, concretely with Gabor frames, as shown in \cite{lu09}. Thus one might wonder about the consequences of our results for the theory of Gabor frames. 
	  \end{itemize}
  \end{qst}
%%%%%%%%%%%%%%%%%%%%%%%%%%%%%%%%%%%%%%%%%%%%%%%%%%%%%%%%%%%%%%%%%%%%%%%%

%%%%%%%%%%%%%%%%%%%%%%%%%%%%%%%%%%%%%%%%%%%%%%%%%%%%%%%%%%%%%%%%%%%%%%%%

\end{document}